\tikzstyle{nodino}=[circle,draw,fill,inner sep=0pt,minimum size=0.5mm]
\tikzstyle{infinito}=[circle,inner sep=0pt,minimum size=0mm]
\tikzstyle{nodo}=[circle,draw,fill,inner sep=0pt, minimum size=0.5*width("k")]
\tikzstyle{nodo_vuoto}=[circle,draw,inner sep=0pt, minimum size=0.5*width("k")]
\tikzset{every loop/.style={min distance=10mm,in=300,out=240,looseness=10}}
\tikzset{place/.style={circle,thick,draw=blue!75,fill=blue!20,minimum
		size=6mm}}
\tikzset{place2/.style={circle,thick,draw=red!75,fill=red!20,minimum
		size=6mm}}
\newcommand{\rr}{{\mathbb R}}
\newcommand{\zz}{{\mathbb Z}}
\newcommand{\nn}{{\mathbb N}}
\newcommand{\G}{{\mathcal{G}}}
\newcommand{\udot}{\|u'\|_{L^2(\mathcal{G})}}
\newcommand{\uLp}{\|u\|_{L^p(\mathcal{G})}}
\newcommand{\uLtwo}{\|u\|_{L^2(\mathcal{G})}}
\newcommand{\HmuG}{H_\mu^1(\mathcal{G})}
\newcommand{\uLsix}{\|u\|_{L^6(\mathcal{G})}}
\newcommand{\K}{\mathcal{K}}
\newcommand{\dx}{\,dx}
\theoremstyle{plain} 
\newtheorem{thm}{Theorem}[section]
\newtheorem{prop}[thm]{Proposition} 
\theoremstyle{definition}
\theoremstyle{definition} 
\newtheorem{defn}{Definition}[section]
\theoremstyle{remark} 
\newtheorem{rem}{Remark}[section]
\title{Mass-constrained ground states of the stationary NLSE on periodic metric graphs}
\author{Simone Dovetta
	\\ \ \\ \ \\
	{\small  Dipartimento di Scienze Matematiche ``G.L. Lagrange'', Politecnico di Torino} \\
	{\small Corso Duca degli Abruzzi, 24, 10129 Torino, Italy} \\ 
	{\small and} \\
	{\small Dipartimento di Matematica ``G. Peano'', Universit\`a degli Studi di Torino }\\
	{\small Via Carlo Alberto, 10, 10123, Torino, Italy} \\
	{\small \texttt{simone.dovetta@polito.it}}
}
\begin{document}
	
	\maketitle
	
	\begin{abstract}
	We investigate the existence of ground states with fixed mass for the nonlinear Schrödinger
	equation with a pure power nonlinearity on periodic metric graphs. Within a variational
	framework, both the $L^2$-subcritical and critical regimes are studied. In the former case,
	we establish the existence of global minimizers of the NLS energy for every mass and every
	periodic graph. In the critical regime, a complete topological characterization
	is derived, providing conditions which allow or prevent ground states of a certain mass from existing. Besides, a rigorous notion of periodic graph is introduced and discussed.	
	\end{abstract}
	
	\section{Introduction}
	
	 Originally fuelled by a wide variety of physical applications, the theory of quantum graphs has
	 nowadays become a prominent topic of research. Moving from linear dynamics on branched
	 structures (see for instance \cite{exner,friedlander} and the monograph \cite{berkolaiko_kuchment}), nonlinear problems have been
	 extensively studied first on star-graphs \cite{squadra_azzurra1,squadra_azzurra2,squadra_azzurra3,squadra_azzurra4}, and more recently on general non-compact
	 metric graphs with at least one half-line \cite{AST calc var,AST funct an,AST critico,AST bound states,bct,dt_p,dt,serra_tentarelli1,serra_tentarelli2,tentarelli}, as well as on compact graphs
	 \cite{cds,dovetta}.
	
	%%%%%%%%%%%%%%%%FIGURE EXAMPLE%%%%%%%%%%%%%%%%%%%
	
	\begin{figure}[t]
		\centering
		\subfloat[][]{
			\includegraphics[width=0.35\columnwidth]{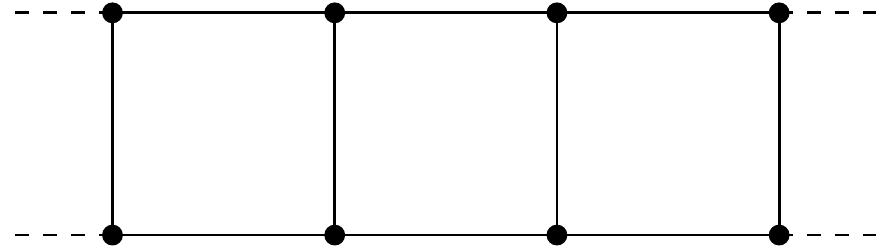}}\qquad
		\subfloat[][]{
		\includegraphics[width=0.5\columnwidth]{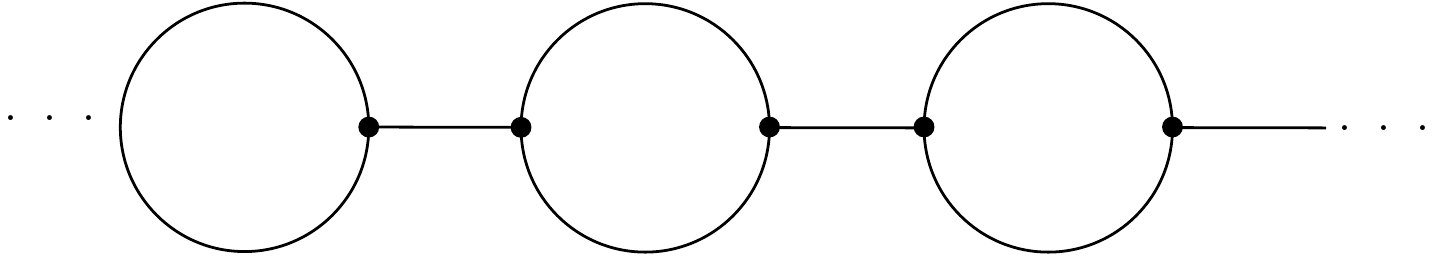}}
	    \caption{Examples of periodic graphs: a ladder graph (a) and a graph made up by circles and segments (b).}
	    \label{FIG-ladder}
	\end{figure}

	%%%%%%%%%%%%%%%%%%%%%%%%%%%%%%%%%%%%%%%%%%%%%%%%%
	
	Among the whole theory, periodic graphs appear to gather a significant interest. As linear problems and thoroughly spectral analysis has been carried on for instance in \cite{exner0,exner1,exner2}, a first investigation of NLS equation on ladder-type graphs (Figure \ref{FIG-ladder}(a)) was initiated in \cite{nakamura}, while spectral problems for the graph in Figure \ref{FIG-ladder}(b) were discussed both in \cite{pely} and in \cite{pely_schneider}. Recently, a variational exploration of the NLS energy on general periodic graphs has been developed in \cite{pankov}, where a generalized Nehari manifold approach is used to establish for the first time the existence of a global minimizer.
	
	In this paper we investigate the existence of \emph{ground states} of the NLS energy functional
	
	\begin{equation}
		\label{EQ-def energy intro}
		E(u,\G)=\frac{1}{2}\udot^2-\frac{1}{p}\uLp^p=\frac{1}{2}\int_\G|u'|^2\,dx-\frac{1}{p}\int_\G|u|^p\,dx
	\end{equation}
	
	\noindent on a general \emph{periodic metric graph} $\G$, with $p\in(2,6]$, under the \emph{mass constraint}
	
	\begin{equation}
		\label{EQ-def mass constraint}
		\uLtwo^2=\mu>0\,.
	\end{equation}
	
	\noindent In what follows, we restrict ourselves to consider only real-valued functions.
	
	The class of graphs we consider here is rather general. Roughly speaking, we say that
	a graph $\G$ is periodic if it is built of an infinite number of copies of a fixed compact graph, the \textit{periodicity cell}, glued together along one direction, i.e., we deal with structures enjoying a $\zz$-symmetry (see section \ref{sec:periodic}).
	
	We briefly recall that a connected metric graph $\G = (V(\G),E(\G))$ is a connected space made up
	by segments of line, the \textit{edges}, glued together at some points, the \textit{vertices}, according to the topology of the graph. Both multiple edges and self-loops are possible. On every edge $e\in E(\G)$,
	a coordinate $x_e$ is defined, providing an identification of $e$ with a real interval $I_e=[0,\ell_e]$.
	For our purposes here, we always have $\ell_e<+\infty$, i.e., all edges in the graph are bounded.
	Moreover, without loss of generality, we assume that all the periodic graphs we consider possess at least one vertex of degree at least 3, in order to exclude the situation $\G=\rr$.
	
	Within this framework, a function $u:\G\to\rr$ can be seen as a family $\{u_e\}_{e\in E(\G)}$, where $u_e:I_e\to\rr$ defines the restriction of $u$ to the edge $e$, and functional spaces can be defined in the natural way
	
	\[
	\begin{split}
	L^p(\G):=&\{u:\G\to\rr\,:\,u_e\in L^p(I_e),\,\forall e\in E(\G)\}\\
	H^1(\G):=&\{u:\G\to\rr\,\,\text{continuous}\, :\, u_e\in H^1(I_e),\,\forall e\in E(\G)\}\,.
	\end{split}
	\]
	
	\noindent Note that, since all $u_e$ are one-dimensional, the continuity condition we introduce in the
	definition of $H^1(\G)$ is meant to impose $u$ to be continuous at the vertices.
	Moreover, as we are looking for functions satisfying \eqref{EQ-def mass constraint}, it is useful to introduce, for $\mu>0$, the mass-constrained space
	
	\[
	H_\mu^1(\G):=\{u\in H^1(\G)\,:\,\uLtwo^2=\mu\}\,.
	\]
	
	\noindent By ground states we mean global minimizers of the energy \eqref{EQ-def energy intro}, that are solutions, for a suitable Lagrange multiplier $\lambda$, of the \textit{stationary Schr\"odinger equation} with focusing nonlinearity
	
	\begin{equation}
		\label{EQ-NLSE}
		u''+|u|^{p-2}u=\lambda u
	\end{equation}
	
	\noindent on each edge of $\G$, with homogeneous \textit{Kirchhoff conditions} at every vertex, that is, the oriented sum of all derivatives entering the vertex is equal to zero
	
	\[
	\sum_{e\succ v}\frac{du}{dx_e}(v)=0
	\]
	
	\noindent (see \cite{AST funct an,AST parma} for a discussion on this condition).
	
	Our approach to the problem is variational. However, the strategy we follow here is significantly different from the one in \cite{pankov}, as extending similar arguments to the mass-constrained setting is far from obvious.
	
	In the $L^2$-subcritical regime $p\in(2,6)$, we prove existence of ground states, always realizing strictly negative energy, for every value of the parameter $\mu$. Indeed, denoting by
	
	\[
	\mathcal{E}_\G(\mu):=\inf_{u\in\HmuG}E(u,\G)
	\]
	
	\noindent the \textit{ground state energy level} of $E$, our first result is the following.
	
	\begin{thm}
		\label{THM 1 INTRO}
		Let $\G$ be a periodic graph and $p\in(2,6)$. Then, for every $\mu>0$
		
		\begin{equation}
			\label{EQ-inf negative THM 1}
			-\infty<\mathcal{E}_\G(\mu)<0
		\end{equation}
		
		\noindent and there always exists a ground state with mass $\mu$, i.e., $u\in\HmuG$ such that $\mathcal{E}_\G(\mu)=E(u,\G)$.
	\end{thm}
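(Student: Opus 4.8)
The plan is to prove the lower bound in \eqref{EQ-inf negative THM 1} (together with coercivity) by a Gagliardo--Nirenberg type estimate, the upper bound by exhibiting a spreading competitor, and the existence of a minimizer by a concentration--compactness argument based on the $\zz$--symmetry of $\G$ and a strict subadditivity property of $\mu\mapsto\mathcal{E}_\G(\mu)$. \emph{Lower bound and coercivity:} since $\G$ is periodic, its edge lengths take finitely many values, so $\ell_0:=\inf_{e\in E(\G)}\ell_e>0$; for $u\in\HmuG$ and any edge $e$, integrating $|u_e|^2$ and averaging over the base point yields $\|u_e\|_{L^\infty(e)}^2\le\ell_e^{-1}\|u_e\|_{L^2(e)}^2+2\|u_e\|_{L^2(e)}\|u_e'\|_{L^2(e)}$, hence
\[
\|u\|_{L^\infty(\G)}^2\le\frac{\mu}{\ell_0}+2\sqrt{\mu}\,\udot .
\]
Then $\uLp^p\le\|u\|_{L^\infty(\G)}^{p-2}\uLtwo^2\le C_\mu\bigl(1+\udot^{(p-2)/2}\bigr)$, and since $p<6$ gives $(p-2)/2<2$, the estimate $E(u,\G)\ge\tfrac12\udot^2-\tfrac{C_\mu}{p}\bigl(1+\udot^{(p-2)/2}\bigr)$ shows that $E(\cdot,\G)$ is bounded below on $\HmuG$ and coercive in $\udot$; thus $\mathcal{E}_\G(\mu)>-\infty$ and every minimizing sequence is bounded in $H^1(\G)$.

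\emph{Upper bound.} Using the structure of periodic graphs from Section~\ref{sec:periodic}, let $\h_n\subset\G$ be a union of $n$ consecutive periodicity cells: it is a connected compact subgraph, and the number of its vertices shared with $\G\setminus\h_n$ is bounded independently of $n$. I would build $v_n\in H^1(\G)$, supported on $\h_n$, equal to a constant on all but the outermost cells and decaying affinely to $0$ at those shared vertices (extended by $0$ elsewhere, hence continuous); imposing $\uLtwo^2=\mu$ forces the plateau height to be of order $(n|\K|)^{-1/2}$, whence $\|v_n'\|_{L^2(\G)}^2=O(n^{-2})$ while $\|v_n\|_{L^p(\G)}^p\ge c\,(n|\K|)^{1-p/2}$. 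Since $p<6$ gives $1-p/2>-2$, the nonlinear term eventually dominates and $E(v_n,\G)<0$ for $n$ large, so $\mathcal{E}_\G(\mu)<0$. (Equivalently one may take $v_n=\sqrt{\mu}\,\psi_n$ with $\psi_n$ the first Dirichlet eigenfunction of $\h_n$, using a Faber--Krahn bound $\lambda_1(\h_n)=O(|\h_n|^{-2})$ together with the Hölder bound $\|\psi_n\|_{L^p(\h_n)}^p\ge|\h_n|^{1-p/2}$.)

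\emph{Strict subadditivity.} For $\alpha>1$ and $w$ almost optimal for $\mathcal{E}_\G(\mu)$, the amplitude rescaling $\sqrt{\alpha}\,w\in H^1_{\alpha\mu}(\G)$ satisfies $E(\sqrt{\alpha}\,w,\G)=\tfrac{\alpha}{2}\|w'\|_{L^2(\G)}^2-\tfrac{\alpha^{p/2}}{p}\|w\|_{L^p(\G)}^p<\alpha\,E(w,\G)$ since $p>2$; as $\mathcal{E}_\G(\mu)<0$ forces $\|w\|_{L^p(\G)}^p\ge -p\,E(w,\G)$ to be bounded below by a fixed positive constant, this improves to $\mathcal{E}_\G(\alpha\mu)<\alpha\,\mathcal{E}_\G(\mu)$ for all $\alpha>1$. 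Applying this with $\alpha=\mu/m$ and $\alpha=\mu/(\mu-m)$, multiplying the two inequalities by $m/\mu$ and $(\mu-m)/\mu$ respectively and adding, one obtains the strict subadditivity
\[
\mathcal{E}_\G(\mu)<\mathcal{E}_\G(m)+\mathcal{E}_\G(\mu-m)\qquad\text{for every }m\in(0,\mu).
\]

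\emph{Existence.} Let $(u_n)$ be a minimizing sequence, bounded in $H^1(\G)$ by the first step, and write $C_k$, $k\in\zz$, for the periodicity cells of $\G$. If $\sup_k\|u_n\|_{L^2(C_k)}\to0$, then a cell--by--cell Gagliardo--Nirenberg estimate gives $\uLp\to0$, so $E(u_n,\G)\ge-\tfrac1p\uLp^p\to0$, contradicting $\mathcal{E}_\G(\mu)<0$. Hence, along a subsequence, $\|u_n\|_{L^2(C_{k_n})}^2\ge\delta>0$ for some $k_n$; translating by the $\zz$--action of $\G$ (which preserves $E(\cdot,\G)$ and the mass constraint) we may assume $k_n\equiv 0$, so $u_n\rightharpoonup u$ in $H^1(\G)$ with $u_n\to u$ in $L^2(C_0)$ and in $L^p$ on every compact subgraph, by compactness of the Sobolev embedding on compact graphs. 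In particular $\|u\|_{L^2(C_0)}^2\ge\delta$, so $u\ne0$ and $m:=\uLtwo^2\in(0,\mu]$. By the Brezis--Lieb lemma and the Hilbert--space orthogonality relation one has the energy splitting $E(u_n,\G)=E(u,\G)+E(u_n-u,\G)+o(1)$; since $u_n-u$ carries mass $\to\mu-m$, a rescaling argument gives $\liminf_nE(u_n-u,\G)\ge\mathcal{E}_\G(\mu-m)$ (with $\mathcal{E}_\G(0):=0$), while $E(u,\G)\ge\mathcal{E}_\G(m)$, whence $\mathcal{E}_\G(\mu)\ge\mathcal{E}_\G(m)+\mathcal{E}_\G(\mu-m)$. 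If $m<\mu$ this contradicts the strict subadditivity above, so $m=\mu$: then $u_n\to u$ strongly in $L^2(\G)$, hence in $L^p(\G)$, and weak lower semicontinuity of $\udot^2$ yields $E(u,\G)\le\liminf_n E(u_n,\G)=\mathcal{E}_\G(\mu)$, i.e.\ $u\in\HmuG$ is a ground state. The one genuine obstacle is the loss of compactness along the periodic direction in this last step; recentering via the $\zz$--symmetry and excluding dichotomy via strict subadditivity are what overcome it, and the strict subadditivity — for which no spatial rescaling is available on a general periodic graph — is in turn the technical heart, resolved by the amplitude rescaling above together with $\mathcal{E}_\G(\mu)<0$.
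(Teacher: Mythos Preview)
Your overall strategy---Gagliardo--Nirenberg for boundedness and coercivity, an explicit competitor for $\mathcal{E}_\G(\mu)<0$, and concentration--compactness with strict subadditivity (obtained via the amplitude rescaling $u\mapsto\sqrt{\alpha}\,u$) for existence---is exactly the paper's. The paper packages the compactness and dichotomy exclusion into Proposition~\ref{PROP-compactness}, recentring via the point where $\|u_n\|_{L^\infty}$ is attained rather than the cell carrying the largest $L^2$ mass, and it rescales both $u$ and $u_n-u$ to mass $\mu$ (comparing with $\tfrac{m}{\mu}\mathcal{E}_\G(\mu)$ and $\tfrac{\mu-m}{\mu}\mathcal{E}_\G(\mu)$) instead of invoking $\mathcal{E}_\G(m)+\mathcal{E}_\G(\mu-m)$; but the mechanism is identical.

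There is, however, a real error in your first upper-bound construction. A plateau of height $h_n\sim(n|\K|)^{-1/2}$ decaying affinely to zero across a \emph{fixed} number of outermost edges has $\|v_n'\|_{L^2(\G)}^2\sim h_n^2=O(n^{-1})$, not $O(n^{-2})$: the gradient is of order $h_n/\ell$ on $O(1)$ edges of length $\ell$, so $\int|v_n'|^2\sim h_n^2$. With the corrected rate, the competition is $n^{-1}$ against $n^{1-p/2}$, and the nonlinear term wins only for $p<4$; thus the plateau argument does not cover $p\in[4,6)$. Fortunately your parenthetical alternative via the first Dirichlet eigenfunction of $\h_n$ does close the gap: testing with a function supported on a path of length $\sim n$ along the periodicity direction gives $\lambda_1(\h_n)=O(|\h_n|^{-2})$, and then $n^{-2}$ against $n^{1-p/2}$ yields $E(v_n,\G)<0$ for all $p<6$. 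The paper takes a different route for this step: it lays a subcritical soliton $\phi_{\mu_1}$ along a path threading consecutive cells and holds the function constant on the remainder of each cell, so that $E(u,\G)=mE(\phi_{\mu_1},\rr)-\tfrac1p\sum_i\|u\|_{L^p(\widetilde{\K}_i)}^p<0$ directly from $E(\phi_{\mu_1},\rr)<0$, with no asymptotic balancing required. Your eigenfunction route is more elementary in that it avoids importing the soliton; the paper's construction is more explicit and sidesteps any spectral estimate on graphs.
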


	Theorem \ref{THM 1 INTRO} unveils a similarity with the real line $\rr$, for which it is known that ground states of the energy are the unique (up to symmetries) solutions of \eqref{EQ-NLSE} with prescribed mass (see for
	instance \cite{cazenave} and Section \ref{sec:subcritical} below).
	
	Actually, it is at the critical exponent $p = 6$ that the problem exhibits a wider variety of behaviours, as the topology of the graph enters the game. Recall that (see \cite{cazenave} and Section \ref{sec:critical} here), when $p = 6$, both for the real line $\rr$ and half-line $\rr^+$, ground states exist if and only if the mass is equal to a threshold value, denoted by $\mu_\rr,\,\mu_{\rr^+}$, respectively. 
	
	For a general periodic graph $\G$, we show that, a critical mass $\mu_\G$ naturally arises as well, its actual value being determined by the specific structure of the graph (see Section \ref{sec:critical}). Moreover, the relation between this threshold and the existence of ground states is more complex than in the case of $\rr$ and $\rr^+$, and the situation changes with respect to the graph we are dealing with.
	
	A key-role in this context is played by the following topological condition, denoted by (H$_{per}$)
	
	\[
	\text{(H}_{per})\,:\,\text{removing any edge of }e\in E(\G)\text{ generates only non-compact connected components.}
	\]
	
	\noindent Assumptions of this fashion have been introduced for the first time in \cite{AST calc var} for graphs with half-lines, named assumption (H), of which (H$_{per}$) here constitutes the periodic version (for a detailed overview on equivalent formulations of (H) we refer to \cite{AST parma}). The key idea behind this condition is that, if $\G$ satisfies (H$_{per}$) , then, for every point $x\in\G$, two disjoint paths of infinite
	length originating at $x$ exist.
	
	We then state our theorems. Recall that a \textit{terminal edge} denotes an edge incident to a vertex of degree 1.
	
	%%%%%%%%%%%%%%%%%%%FIGURE THEOREMS%%%%%%%%%%%%%%%%%%%%
	
	\begin{figure}[t]
		\centering
		\subfloat[][]{
			\includegraphics[width=0.5\columnwidth]{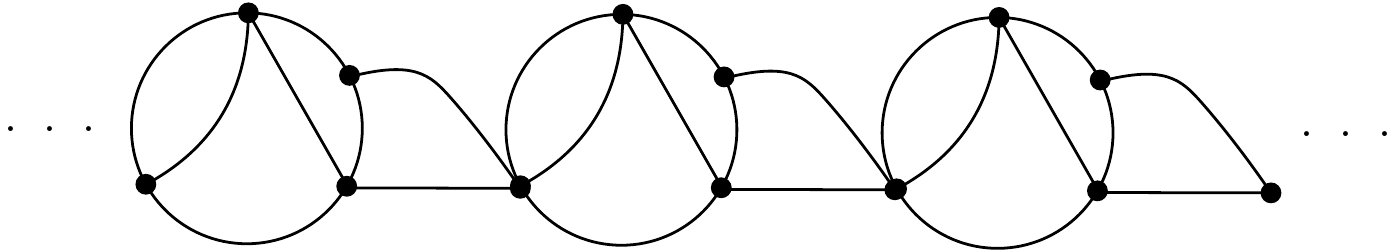}} \qquad
		\subfloat[][]{
			\includegraphics[width=0.4\columnwidth]{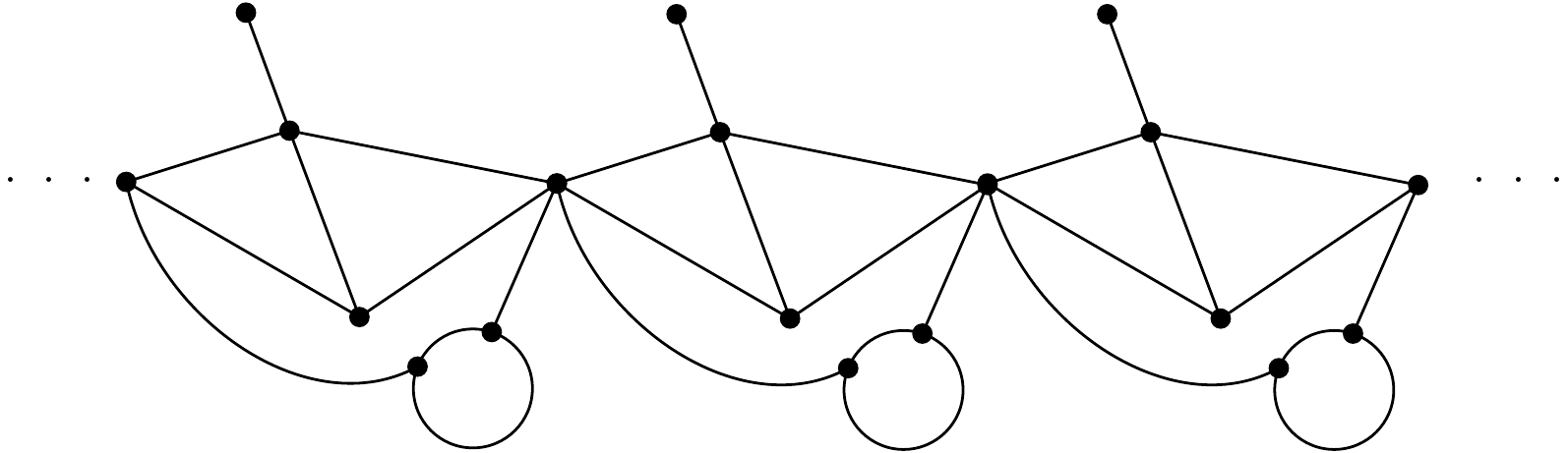}}\\
		
		\subfloat[][]{
			\includegraphics[width=0.5\columnwidth]{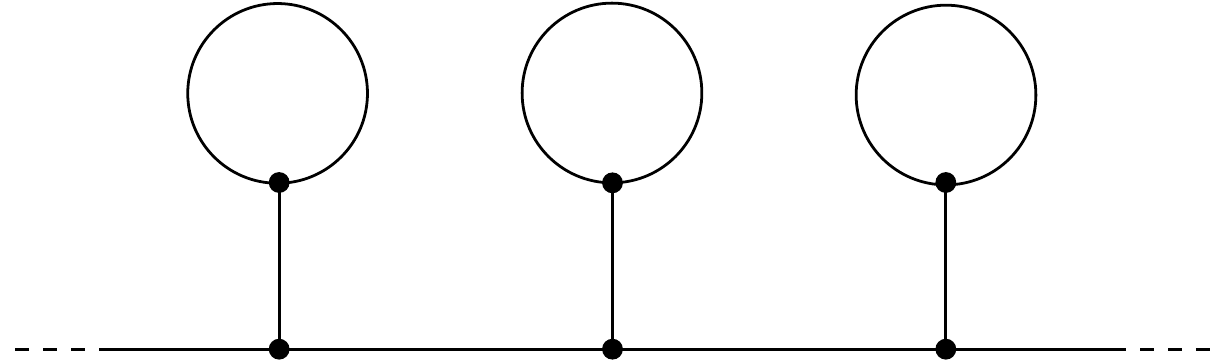}}
		\caption{examples of periodic graphs satisfying assumption (H$_{per}$) (a), with a terminal edge (b) and violating (H$_{per}$) without a terminal edge (c).}
		\label{FIG-critical}
	\end{figure}
	
	%%%%%%%%%%%%%%%%%%%%%%%%%%%%%%%%%%%%%%%%%%%%%%%%%%%%%%
	
	\begin{thm}
		\label{THM 2 INTRO}
		Let $\G$ be a periodic graph and $p=6$. Then:
		\begin{itemize}
			\item[(i)] if $\G$ satisfies assumption (H$_{per}$) (Figure \ref{FIG-critical}(a)), then $\mu_\G=\mu_\rr$ and
			\begin{equation}
				\label{EQ-inf p=6 H per}
				\mathcal{E}_\G(\mu)=\begin{cases}
				0 & \text{if }\mu\leq\mu_\rr\\
				-\infty & \text{if }\mu>\mu_\rr\,;
				\end{cases}
			\end{equation}
			\item[(ii)] if $\G$ has a terminal edge (Figure \ref{FIG-critical}(b)), then $\mu_\G=\mu_{\rr^+}$ and
			\begin{equation}
				\label{EQ-inf p=6 pendant}
				\mathcal{E}_\G(\mu)=\begin{cases}
				0 & \text{if }\mu\leq\mu_{\rr^+}\\
				-\infty & \text{if }\mu>\mu_{\rr^+}\,.
				\end{cases}
			\end{equation} 
		\end{itemize}
	\noindent Moreover, the infimum is never attained.
	\end{thm}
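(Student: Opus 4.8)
The plan is to transfer to $\G$ the critical theory on $\rr$ and $\rr^+$ by combining a topology‑dependent Gagliardo--Nirenberg inequality with a criticality/scaling argument. The core step is to prove that, for every $u\in H^1(\G)$,
\begin{equation}\label{EQ-GN plan}
\uLsix^6\le C\,\udot^2\,\uLtwo^4,\qquad C=\begin{cases}C_\rr & \text{if (H}_{per})\text{ holds}\\ C_{\rr^+} & \text{if }\G\text{ has a terminal edge,}\end{cases}
\end{equation}
where $C_\rr,C_{\rr^+}$ are the sharp constants on $\rr$ and $\rr^+$, normalised so that $C_\rr\mu_\rr^2=C_{\rr^+}\mu_{\rr^+}^2=3$. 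Inequality \eqref{EQ-GN plan} is the periodic counterpart of the rearrangement bounds of \cite{AST calc var,AST critico}: given $u\in H^1(\G)\setminus\{0\}$, the maximum $M=\max_\G|u|$ is attained (since $u\in L^2(\G)\cap L^\infty(\G)$ and $|u(x)|\to0$ as $x\to\infty$ in $\G$), say at $x_0$; one then builds a monotone rearrangement $u^\star$ of $|u|$ onto $\rr^+$ — always legitimate, because $|u|$ assumes every value in $(0,M)$ by connectedness — and onto $\rr$ whenever $\#\{|u|=t\}\ge2$ for a.e.\ $t\in(0,M)$. As $u^\star$ is equimeasurable with $|u|$ it preserves the $L^2$ and $L^6$ norms, while a P\'olya--Szeg\H{o} inequality on graphs gives $\|(u^\star)'\|_{L^2}\le\udot$, so \eqref{EQ-GN plan} follows by applying the sharp Euclidean inequality to $u^\star$. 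Under (H$_{per}$) the two disjoint infinite paths issuing from $x_0$ recalled in the Introduction force $\#\{|u|=t\}\ge2$ for every $t\in(0,M)$, which licenses the $\rr$‑rearrangement; with a terminal edge one only has $\#\{|u|=t\}\ge1$, hence the $\rr^+$‑rearrangement.

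Set $\nu=\mu_\rr$ in case (i) and $\nu=\mu_{\rr^+}$ in case (ii), so that $C\nu^2=3$ in \eqref{EQ-GN plan}. For $\mu\le\nu$ one gets on $\HmuG$
\[
E(u,\G)=\tfrac12\udot^2-\tfrac16\uLsix^6\ge\tfrac12\udot^2\big(1-\tfrac{\mu^2}{\nu^2}\big)\ge0,
\]
hence $\mathcal{E}_\G(\mu)\ge0$. If $\mu<\nu$, equality requires $\udot=0$, i.e.\ $u$ constant, i.e.\ $u\equiv0$ (since $|\G|=\infty$), contradicting $\uLtwo^2=\mu>0$; so $E>0$ on $\HmuG$ and the infimum is not attained. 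If $\mu=\nu$, equality forces equality in the Euclidean Gagliardo--Nirenberg inequality for $u^\star$, so $u^\star$ is a soliton (case (i)) or half‑soliton (case (ii)) and $u$ would be such a profile transplanted onto $\G$; but a (half‑)soliton vanishes nowhere, whereas continuity of $u$ at any vertex of degree $\ge3$ would force it to vanish there — impossible, because $\G$ is connected, has a vertex of degree $\ge3$, and is therefore neither $\rr$ nor $\rr^+$. Thus the infimum is not attained for any $\mu\le\nu$; for $\mu>\nu$ it equals $-\infty$ (see below) and a fortiori is not attained.

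For the upper bounds: given $\mu>0$, spread the mass over $\G_N:=\bigcup_{|k|\le N}\K_k$, taking $u_N\equiv c_N$ on $\G_N$, letting it decay affinely to $0$ over a collar of bounded size around $\partial\G_N$ (whose complexity is independent of $N$), and setting $u_N=0$ outside, with $c_N$ fixed by $\|u_N\|_{L^2(\G)}^2=\mu$; then $c_N^2\sim\mu/(N|\K|)\to0$, so $\|u_N'\|_{L^2(\G)}^2=O(c_N^2)\to0$ and $\|u_N\|_{L^6(\G)}^6=O(Nc_N^6)\to0$, whence $E(u_N,\G)\to0$ and $\mathcal{E}_\G(\mu)\le0$ for every $\mu$. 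Combined with the previous paragraph, $\mathcal{E}_\G(\mu)=0$ for $\mu\le\nu$, which also identifies $\mu_\G=\nu$. For $\mu>\nu$: the critical theory on $\rr$ (case (i)) or $\rr^+$ (case (ii)) yields $\mathcal{E}_\rr(\mu)=-\infty$, resp.\ $\mathcal{E}_{\rr^+}(\mu)=-\infty$, so there is a compactly supported $w\in H^1(\rr)$ (resp.\ $w\in H^1(\rr^+)$ supported near $0$) with $\|w\|_{L^2}^2=\mu$ and $E(w)<0$; since $p=6$ is critical, $w_\sigma(x):=\sqrt\sigma\,w(\sigma x)$ has mass $\mu$ and $E(w_\sigma)=\sigma^2 E(w)$, and for $\sigma$ large its support fits inside the interior of a single edge of $\G$ — in case (ii), inside the terminal edge, with the degree‑$1$ vertex placed at $0$. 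Transplanting $w_\sigma$ there and extending by $0$ produces $\tilde w_\sigma\in\HmuG$ with $E(\tilde w_\sigma,\G)=\sigma^2 E(w)\to-\infty$, so $\mathcal{E}_\G(\mu)=-\infty$.

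The main obstacle is \eqref{EQ-GN plan}: establishing the sharp, topology‑dependent Gagliardo--Nirenberg inequality on the periodic graph, and in particular the improvement to the line constant $C_\rr$ under (H$_{per}$) — this needs the graph rearrangement to be carried out so that the ``two disjoint infinite paths'' geometry produces the factor‑$2$ gain, together with control of the P\'olya--Szeg\H{o} step and the rigidity of the equality case feeding the non‑attainment statement. Once \eqref{EQ-GN plan} is available, the remaining ingredients — thin nearly constant test functions for $\mathcal{E}_\G\le0$, and concentration of a negative‑energy line/half‑line profile for $\mathcal{E}_\G=-\infty$ — are routine.
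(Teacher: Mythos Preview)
Your overall strategy coincides with the paper's: obtain the sharp Gagliardo--Nirenberg constant $C_\rr$ (resp.\ $C_{\rr^+}$) by rearranging onto $\rr$ (resp.\ $\rr^+$), deduce $E\ge0$ on $\HmuG$ for $\mu\le\nu$, show $\mathcal{E}_\G(\mu)\le0$ via spreading nearly-constant test functions over $\bigcup_{|i|\le N}\K_i$, and get $\mathcal{E}_\G(\mu)=-\infty$ for $\mu>\nu$ by scaling a compactly supported negative-energy profile into a single edge (the terminal edge in case (ii)). All of this matches the paper.

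The one genuine gap is your non-attainment argument at the critical mass $\mu=\nu$. The claim ``continuity of $u$ at any vertex of degree $\ge3$ would force it to vanish there'' is simply false: a positive continuous function can pass through a degree-$3$ vertex without any obstruction. Likewise, the inference ``$u^\star$ is a soliton, hence $u$ is a soliton transplanted onto $\G$'' does not follow from equality in the Euclidean Gagliardo--Nirenberg inequality alone. What is actually needed --- and what the paper does --- is to observe that $E(u,\G)=0=E(u^\star,\rr)$ forces equality in the P\'olya--Szeg\H{o} step, i.e.\ $\udot=\|(u^\star)'\|_{L^2(\rr)}$, and equality there requires $\#\{x\in\G:|u(x)|=t\}=2$ (resp.\ $=1$ in case (ii)) for a.e.\ $t$ in the range of $|u|$. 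The presence of a vertex of degree $\ge3$ then produces a set of levels of positive measure with strictly more preimages, yielding the contradiction. Once you replace your vanishing-at-vertices claim with this level-set count, the argument is complete and identical to the paper's.
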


	\begin{thm}
		\label{THM 3 INTRO}
		Let $\G$ be a periodic graph violating assumption (H$_{per}$) and with no terminal edge (Figure \ref{FIG-critical}(c)), and $p=6$. If $\mu_\G<\mu_\rr$, then ground states with mass $\mu$ exist if and only if $\mu\in[\mu_\G,\mu_\rr]$.
	\end{thm}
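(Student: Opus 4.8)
\textbf{Proof proposal for Theorem \ref{THM 3 INTRO}.}

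The plan is to split the statement into three parts: (a) $\mathcal{E}_\G(\mu)=0$ and is not attained for $\mu<\mu_\G$; (b) $\mathcal{E}_\G(\mu)=-\infty$ for $\mu>\mu_\rr$, so no ground state can exist there either; (c) for $\mu\in[\mu_\G,\mu_\rr]$ the level $\mathcal{E}_\G(\mu)$ is finite, strictly negative when $\mu>\mu_\G$, and attained, while at the endpoint $\mu=\mu_\G$ one still has a (possibly zero-energy) minimizer, which should be identified with a copy of the soliton on a suitable subgraph. The key structural input is that, since $\G$ violates (H$_{per}$) and has no terminal edge, there exists an edge $e$ whose removal produces a \emph{compact} connected component $\K$. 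Restricting a competitor to $\K$, one gets a compactly supported function for which the sharp one-dimensional (or rather ``graph-with-boundary'') Gagliardo--Nirenberg inequality at $p=6$ applies, and this is exactly what pins down the constant $\mu_\G$: it is the largest mass for which
\[
\frac{1}{6}\uLsixcompact^6\le\frac{1}{2}\udot^2
\]
can fail to force the energy to blow up, i.e. $\mu_\G$ is defined through the optimal constant in the $L^6$ Gagliardo--Nirenberg inequality on the compact pieces of $\G$.

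For part (a), given $u\in\HmuG$ with $\mu<\mu_\G$, the strategy is to localize: decompose $\G$ into the compact components obtained by cutting the ``bad'' edges together with the remaining non-compact part, and on each piece use the appropriate sharp $L^6$ inequality — on a compact component the Gagliardo--Nirenberg constant is controlled by $\mu_\G$, on the non-compact part by $\mu_\rr$ (since such a part, by construction, satisfies an (H)-type condition and hence enjoys the line's sharp constant, cf. the mechanism behind Theorem \ref{THM 2 INTRO}(i)). Summing the local bounds and using $\mu<\mu_\G\le\mu_\rr$ yields $E(u,\G)\ge 0$, so $\mathcal{E}_\G(\mu)=0$. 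Non-attainment follows because equality in the sharp $L^6$ inequality on any compact piece forces $u$ to be a scaled soliton restricted there, which cannot be glued to a globally $H^1$, Kirchhoff, mass-$\mu$ function; more simply, $E(u,\G)=0$ with $u\not\equiv0$ would force $u'\equiv0$ on the non-compact part, contradicting $u\in H^1$ unless $u\equiv0$ there, and then a sharp-inequality argument on the compact part closes it. Part (b) is the easy direction: since $\G$ contains at least one compact subgraph on which one can place a rescaled truncated soliton and let the scaling parameter diverge, the standard test-function computation (the same one that yields $\mathcal{E}_\rr(\mu)=-\infty$ for $\mu>\mu_\rr$) gives $\mathcal{E}_\G(\mu)=-\infty$.

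For part (c), the core is existence of a minimizer when $\mu\in(\mu_\G,\mu_\rr]$. I would first show $-\infty<\mathcal{E}_\G(\mu)<0$: finiteness from the combined sharp inequalities of part (a) (now with $\mu\le\mu_\rr$, so the non-compact contribution is still controlled, while the compact contribution is merely bounded by a finite constant times $\udot^2$), and negativity by testing with a soliton profile supported on one compact piece with mass slightly above $\mu_\G$ and filling the remaining mass $\mu-(\text{that})$ cheaply on the non-compact part at negligible energy cost. Then take a minimizing sequence $\{u_n\}$; by the finiteness bound it is bounded in $H^1(\G)$, so up to subsequences $u_n\rightharpoonup u$ in $H^1_{loc}$ and a.e. The main obstacle — and this is where the condition $\mu_\G<\mu_\rr$ is essential — is \emph{compactness}: a priori mass can escape to infinity along the periodic direction. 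I expect to rule this out by a concentration-compactness / ``loss-of-mass'' dichotomy argument: if a fraction $\mu_0$ of the mass runs off to infinity, the escaping part lives (asymptotically) on a region satisfying the (H)-type condition and so contributes energy $\ge 0$ with sharp constant $\mu_\rr$, while the part that stays contributes $\mathcal{E}_\G(\mu-\mu_0)$; sub-additivity together with the strict inequality $\mathcal{E}_\G(\mu)<0=\mathcal{E}_\G(\text{small mass})$ and the strict gain from keeping mass on the compact obstruction forces $\mu_0=0$. Hence $u_n\to u$ strongly in $L^2$ and $L^6$, $u\in\HmuG$, and by weak lower semicontinuity of the Dirichlet term $E(u,\G)\le\mathcal{E}_\G(\mu)$, so $u$ is a ground state. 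Finally, to get the ``only if'' direction, non-existence for $\mu<\mu_\G$ is part (a) and for $\mu>\mu_\rr$ is part (b); the remaining point is the endpoint $\mu=\mu_\G$, where one must check a minimizer still exists — here the natural candidate is the soliton of mass $\mu_\G$ sitting on the compact obstruction (extended by $0$), and one verifies it attains $\mathcal{E}_\G(\mu_\G)$ by the same localized sharp-inequality computation, with equality now achievable. The delicate bookkeeping will be making the ``escaping mass sees constant $\mu_\rr$'' heuristic rigorous on a general periodic graph, i.e. proving the relevant sub-additivity inequality $\mathcal{E}_\G(\mu)\le\mathcal{E}_\G(\mu-\sigma)+\mathcal{E}_\rr(\sigma)$ and exploiting $\mu_\G<\mu_\rr$ to make it strict whenever $\sigma>0$.
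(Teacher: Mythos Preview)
Your proposal has the right three-regime skeleton, but there are two genuine gaps.

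First, you misidentify $\mu_\G$. In the paper $\mu_\G=\sqrt{3/C_\G}$, where $C_\G$ is the optimal constant in the \emph{global} Gagliardo--Nirenberg inequality \eqref{EQ-critical GN} on all of $\G$, not a constant attached to ``compact pieces''. With this definition, part (a) is immediate from \eqref{EQ-bound p=6 energy GN}: if $\mu<\mu_\G$ then $E(u,\G)>0$ for every nonzero $u\in\HmuG$, so $\mathcal{E}_\G(\mu)=0$ and it is not attained --- no decomposition is needed. More seriously, your plan for part (c) (``finiteness from the combined sharp inequalities'') does not yield boundedness of minimizing sequences once $\mu>\mu_\G$: in that range the coefficient in \eqref{EQ-bound p=6 energy GN} is negative, so $\udot$ is not controlled a priori. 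The paper's replacement is the \emph{modified} Gagliardo--Nirenberg inequality \eqref{EQ-modified GN},
\[
\uLsix^6\le 3\Big(\frac{\mu-\theta_u}{\mu_\rr}\Big)^2\udot^2+C\sqrt{\theta_u},\qquad \theta_u\in[0,\mu],
\]
which, inserted into the energy, gives simultaneously $\mathcal{E}_\G(\mu)>-\infty$ and an $H^1$ bound on any minimizing sequence (equations \eqref{EQ-bound u'}--\eqref{EQ-inf theta not 0}); after that, the Brezis--Lieb splitting of Proposition~\ref{PROP-compactness} finishes the case $\mu\in(\mu_\G,\mu_\rr]$. Your subadditivity/concentration-compactness sketch is morally in the same direction but does not substitute for this quantitative inequality.

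Second, your treatment of the endpoint $\mu=\mu_\G$ does not work. There is no reason for ``the soliton of mass $\mu_\G$ on the compact obstruction, extended by zero'' to have energy $0$, nor for such a profile to have mass exactly $\mu_\G$ on that piece; $\mu_\G$ is a global invariant of $\G$, not the critical mass of a subgraph. The paper proceeds quite differently: it chooses as minimizing sequence a \emph{maximizing} sequence $\{u_n\}$ for the Gagliardo--Nirenberg quotient, so that $\|u_n\|_{L^6(\G)}^6/\|u_n'\|_{L^2(\G)}^2\to 3$, and uses \eqref{EQ-modified GN} together with $\mu_\G<\mu_\rr$ to bound it in $H^1$. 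To exclude $u_n\rightharpoonup 0$, it doubles every edge in the set $\mathcal{B}\subset\K_0$ of edges whose removal creates a compact component (nonempty precisely because (H$_{per}$) fails), stretches $u_n$ accordingly, and observes that on the resulting graph every level of $\widetilde u_n$ has at least two preimages; symmetric rearrangement onto $\rr$ then forces $\|u_n\|_{L^6(\G)}^6/\|u_n'\|_{L^2(\G)}^2\le 3\mu_\G^2/\mu_\rr^2+o(1)<3$, a contradiction. This is exactly where the structural hypotheses and the strict inequality $\mu_\G<\mu_\rr$ enter, and your proposal supplies no mechanism of this kind.
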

	
	Theorems \ref{THM 2 INTRO}-\ref{THM 3 INTRO} provide a complete topological characterization of the existence of ground states in the critical setting (similarly to what reported in \cite{AST critico} for graphs with half-lines). 
	
	On the one hand, it turns out that graphs satisfying (H$_{per}$) behave almost as $\rr$, whereas the ones with a terminal edge fake the half-line $\rr^+$, as the values of the corresponding thresholds are respectively the same. However, for both these classes of graphs ground states never exist, even at the critical masses. Thus, both (H$_{per}$) and the presence of a terminal edge provide topological sufficient conditions preventing the existence of global minimizers. 
	
	On the other hand, for all other graphs, global minimizers actually exist for a whole interval of masses provided $\mu_\G<\mu_\rr$, and we also show that the class of graphs fulfilling this assumption is nonempty (see Proposition \ref{prop-signpost}). However, it is still unclear whether such a condition is immediately satisfied by violating (H$_{per}$), so up to now it is necessary to impose it to recover the above existence result.
	
	To conclude this Introduction, we wish to stress once more the fact that all our results hold
	for periodic graphs in which each periodicity cell shares connections with exactly two of the others, so that the whole graph displays a $\zz-$symmetry. If such a condition is removed, and one allows for repetitions of the periodicity cell along more than one direction, the situation drastically changes and the possible behaviours seem to be sensitively varying (see Figure \ref{FIG-grid} for some examples). Further investigations in this direction have been recently initiated in \cite{ADST} for the so-called doubly periodic graphs (i.e., graphs with a $\zz^2$-symmetry) as the two-dimensional grid in Figure \ref{FIG-grid}(b), where threshold phenomena have been shown to appear not only at $p=6$ but for a whole interval of exponents. 
	
	\medskip
	The paper is organised as follows. In Section \ref{sec:periodic} we discuss the definition of periodicity, characterizing the class of graphs we are considering. Section \ref{sec:subcritical} deals with the subcritical regime, whereas Section \ref{sec:critical} is devoted to the critical case. Finally, Appendix \ref{appendix} runs through the generality of the definition of periodicity given in Section \ref{sec:periodic} from a graph theoretical point of view. 
	
	%%%%%%%%%%%%%%FIGURE GRID%%%%%%%%%%%%%%%%%%%%%%%%%%%%%
	
	\begin{figure}[t]
		\centering
		\subfloat[][]{
		\includegraphics[width=0.4\textwidth]{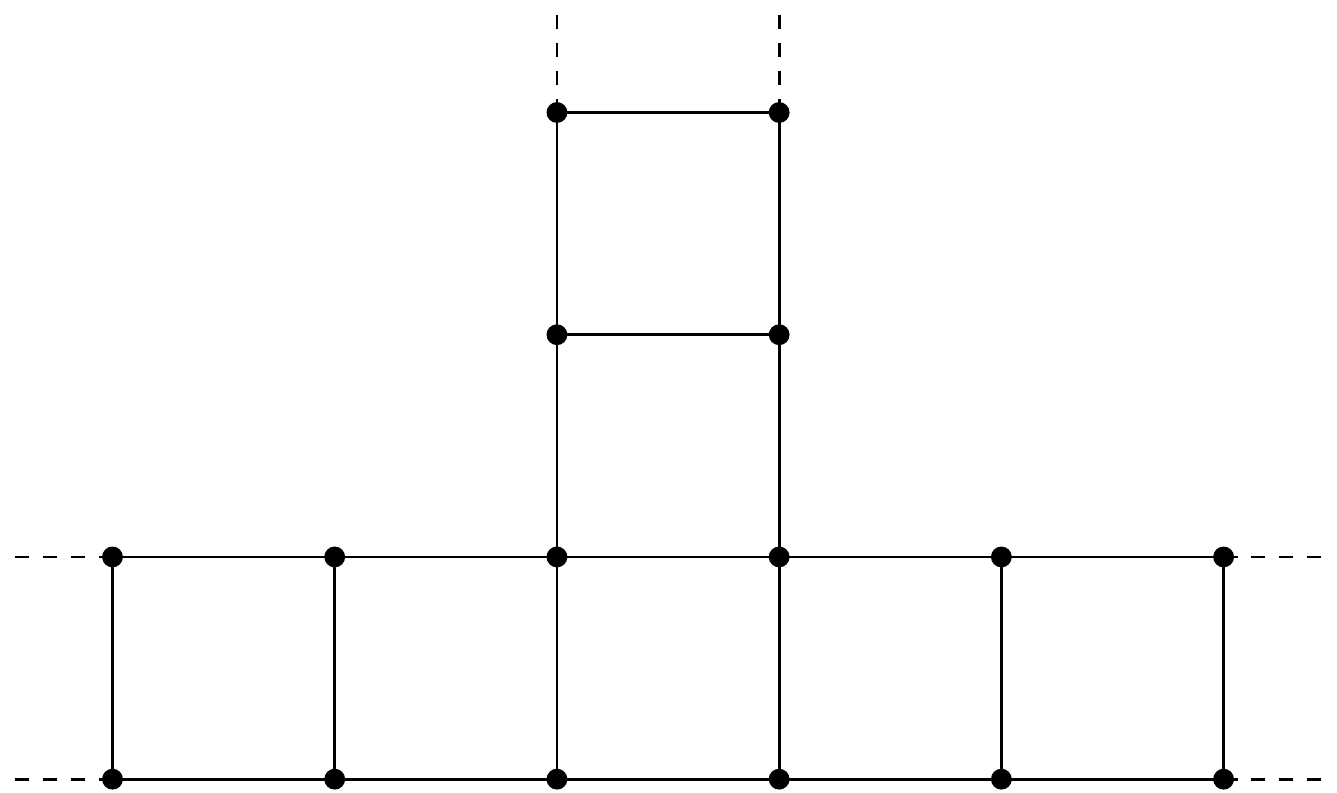}}\qquad
		\subfloat[][]{
		\includegraphics[width=0.25\textwidth]{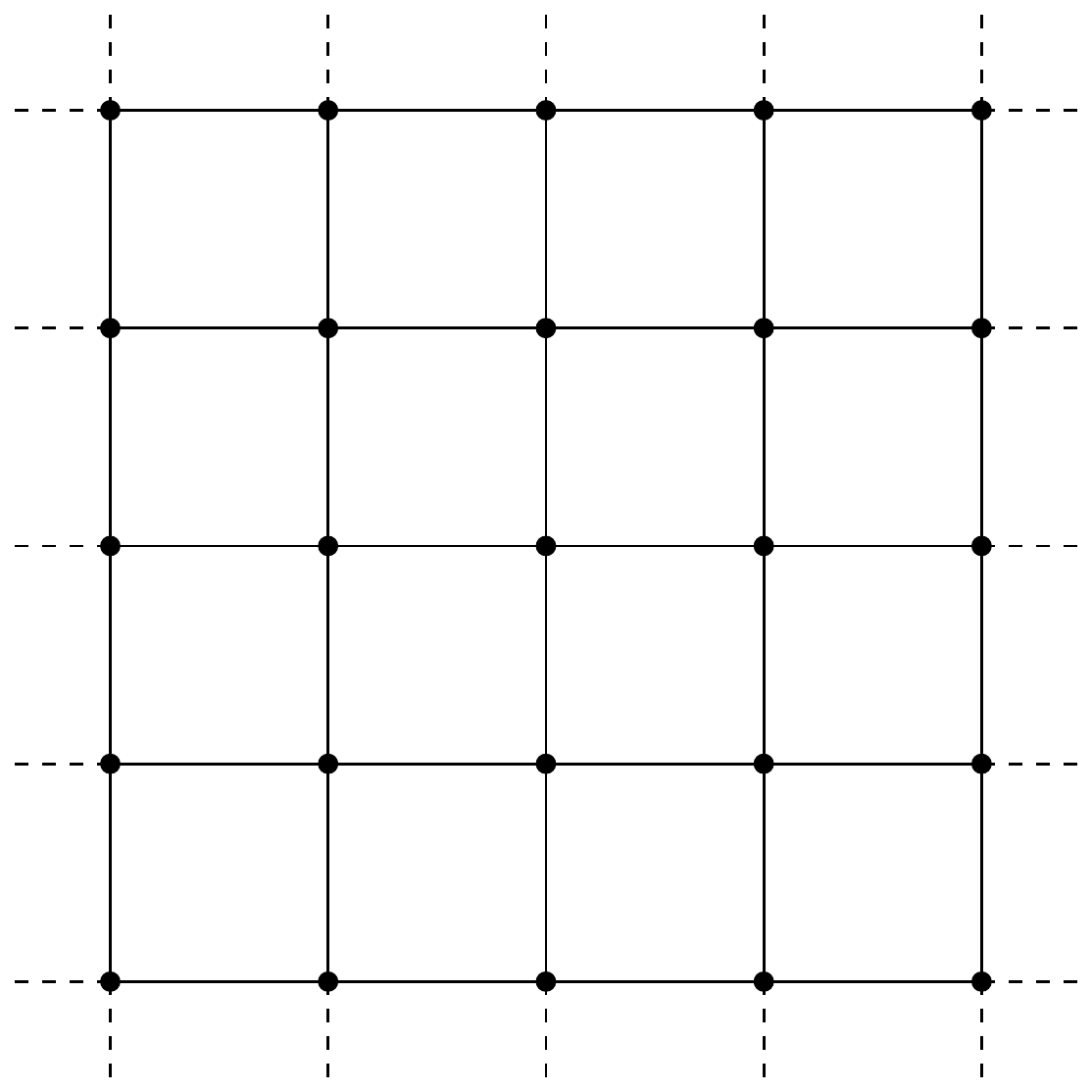}}
		\caption{examples of graphs in which the periodicity cell is repeated in two directions (a) and in an infinite number of directions (b).}
		\label{FIG-grid}
	\end{figure}

	%%%%%%%%%%%%%%%%%%%%%%%%%%%%%%%%%%%%%%%%%%%%%%%%%%%%%%%%

	\section{Periodic graphs: formal definition}
	\label{sec:periodic}
	
	The aim of this section is to provide a rigorous definition of what we mean by \textit{periodic graphs}.
	
	To this purpose, let us begin by recalling the approach of \cite{pankov}, where the periodicity of a graph is described in terms of a proper group action (see also Chapter 4 in \cite{berkolaiko_kuchment}). Indeed, let $\G$ be a connected metric graph with sets of vertices and edges $V(\G)$, $E(\G)$, respectively, and consider an action of the group $\zz^n$ on $\G$
	
	\[
	\zz^n\times\G\ni(g,x)\mapsto gx\in\G
	\]
	
	\noindent which is a graph automorphism, i.e., it maps vertices into vertices and edges into edges, and it preserves the lengths, i.e., for every interval $I\subset e\in E(\G)$ and every $g\in\zz^n$, both $I$ and $gI$ has the same length, $|I|=|gI|$. 
	
	Then, following \cite{pankov}, $\G$ equipped with the action of $\zz^n$ is said to be periodic if the action is 
	
	\begin{itemize}
		\item[1.] \textit{free}, that is, $gx=x\,\Longrightarrow\,g=0$;
		\item[2.] \textit{discrete}, that is, for every $x\in\G$, there is a neighbourhood $U$ of $x$ such that $gx\notin U$, for every $g\in\zz^n/\{0\}$;
		\item[3.] \textit{co-compact}, that is, there exists a compact set $Y\subset\G$ such that $\G=\bigcup_{g\in\zz^n}gY$.
	\end{itemize}  
	
	Particularly, the co-compactness implies that the whole graph $\G$ can be seen as the orbit through the action of $\zz^n$ of a fixed subset of $\G$, which is called a \textit{fundamental domain}.
	
	Therefore, the previous definition follows the strategy of identifying a periodic cell that repeats itself within a given graph $\G$ under a $\zz^n-$symmetry. However, for the purposes of the present paper, we decide to exploit the inverse direction, introducing a dual definition of periodicity which moves from a given compact graph and prescribes a way to glue together infinitely many copies of it to form a periodic structure. Such a procedure is actually equivalent to a special case in the general definition above, namely the case of periodic graphs sharing a $\zz$--symmetry (see Remark \ref{rem-def} below). 
	
	Let then $\K$ be a compact graph, i.e., a graph with a finite number of vertices and edges, all
	of finite length. Let $D$ (\textit{donors}) and $R$ (\textit{receivers}) be two non-empty subsets of the set $V(\K)$ of vertices of $\K$, and $\sigma:D\to R$ be a function (from donors to receivers) such that
	
	\begin{itemize}
		\item[\textit{(i)}] $D\cap R=\emptyset$;
		\item[\textit{(ii)}] $\sigma$ is bijective.
	\end{itemize}
	
	Consider now an infinite number of copies of $\K$, indexed by the integers, $\{\K_i\}_{i\in\zz}$, and let $D_i,R_i$ be the subsets of $V(\K_i)$ corresponding to $D,R$, respectively, for every $i\in\zz$. Setting $\mathbb{G}:=\bigcup_{i\in\zz}\K_i$, and thinking of $\sigma$ as a map from $D_i$ to $R_{i+1}$, for every $i$, we introduce the relation
	
	\[
	v\sim w\quad\Longleftrightarrow\quad\begin{cases}
	v=w & \text{if }v,w\in \K_i\,,\text{ for some }i\in\zz\\
	\sigma(v)=w & \text{if }v\in D_i\,,\, w\in R_{i+1}\,,\text{ for some }i\in\zz\\
	\sigma(w)=v & \text{if }v\in R_{i+1}\,,\, w\in D_i\,,\text{ for some }i\in\zz
	\end{cases}
	\]
	
	\noindent for every $v,w\in\mathbb{G}$.
	
	It is immediate to verify that the above relation is well-defined and it is in fact an equivalence on $\mathbb{G}$. We thus give the following definition.
	
	\begin{defn}
		\label{DEF-periodic}
		Let
		
		\[
		\G:=\mathbb{G}_{/\sim}
		\]
		
		\noindent denote the quotient space of $\mathbb{G}$ with respect to $\sim$. We say that $\G$ is a \textit{periodic graph} with \textit{periodicity cell} $\K$ and \textit{pasting rule} $\sigma$.
	\end{defn}
	
    \begin{rem}
        As a first example, note that also the real line $\rr$ can be seen as a periodic graph in the spirit of Definition \ref{DEF-periodic}, letting, for instance, $\K=[0,1]$, $D=\{1\}$, $R=\{0\}$ and $\sigma(1)=0$. However, as anticipated in the Introduction, to avoid such situation, we always assume that $\G$ has at least one vertex of degree at least 3.
    \end{rem}	

	As $\sigma$ does not involve the edges of $\K_i$, for any $i\in\zz$, the sets of vertices and
	edges of $\G$ are
	
	\[
	V(\G)=\Big(\bigcup_{i\in\zz}V(\K_i)\Big)_{/\sim}\qquad E(\G)=\bigcup_{i\in\zz}E(\K_i)\,.
	\]
	
	\noindent Moreover, we highlight that, for every $i$, the pasting rule $\sigma$ always maps $D_i$ into $R_{i+1}$. Henceforth, by construction, the only periodic graphs we are considering are the ones in which each periodicity cell shares connections with exactly two of the others, i.e., the graph enjoys a $\zz$-symmetry. 
	
	It is immediate to verify that Definition \ref{DEF-periodic} can be seen as a particular case of the one given in \cite{pankov} with $n=1$. Indeed, if $\G$ is periodic as in Definition \ref{DEF-periodic}, then every point of the graph belongs to a certain copy of the periodicity cell $\K$. Hence, given any $x\in\K$, let us denote by $x_i\in\G$ the corresponding point of $\G$ belonging to the $i$--th copy of $\K$, $\K_i$. According to this notation, it is readily seen that the group action given by
	
	\[
	\begin{split}
	\zz\times\G&\to\G\\
	(k,x_i)&\mapsto x_{k+i}\qquad\forall\,i\in\zz
	\end{split}
	\]
	
	\noindent  is free, discrete and co--compact and that $\K$ is a fundamental domain.
	
	\begin{rem}
	\label{rem-def}
	We can actually show that Definition \ref{DEF-periodic} is equivalent to the one in \cite{pankov} with $n=1$. 
	
	To this aim, assume that $\G$ is periodic according to \cite{pankov} with $n=1$ and let $Y$ be a fundamental domain satisfying the additional property that, for every $x\in Y$, we have $-1x\notin Y$ (being this not restrictive). Since $\G$ is connected, there must exist at least one point $x\in Y$ such that, for every neighbourhood $U\subset\G$ of $-1x$, then $U\cap Y\neq\emptyset$. Thus letting 
	
	\[
	\begin{split}
		D:&=\{\,x\in Y\,:\,U\cap Y\neq\emptyset\,,\,\forall\, U\text{ neighbourhood of }-1x\,\}\\
		R:&=\{\,y\notin Y\,:\,y=-1x\text{ for some }x\in D\,\}\\
		\K:&=Y\cup R\\
		\sigma:&\,D\to R\qquad\sigma (x)=-1x\,,
	\end{split}
	\]
	
	\noindent we get that $\G$ is the periodic graph arising from Definition \ref{DEF-periodic} with periodicity cell $\K$ and pasting rule $\sigma$.
	\end{rem}
	We end up this section by showing that properties \textit{(i)}-\textit{(ii)} we require introducing $D,R$ and $\sigma$ imply that diam$(\G)=+\infty$. 
	
	Indeed, let $x,y\in\K$ be such that $x\in D, y\in R$ and $\sigma(x)=y$. $\K$ being connected, let $\gamma\subset\K$ be the smallest path joining $x$ and $y$. Denote by $x_i,y_i\in\K_i$ the points corresponding to $x,y$ belonging to the $i-$th copy of $\K$, and by $\gamma_i\subset\K_i$ the corresponding copy of $\gamma$. As $\sigma(x_i)=y_{i+1}$, so that, building up $\G$ according to Definition \ref{DEF-periodic}, $x_i$ and $y_{i+1}$ becomes the same point, it follows that the union of all $\gamma_i$ is a connected path in $\G$, leading to
	
	\[
	\text{diam}(\G)\geq\big|\bigcup_{i\in\zz}\gamma_i\big|=\sum_{i\in\zz}|\gamma_i|=+\infty
	\]
	
	\noindent since $|\gamma_i|=|\gamma|>0$, for every $i\in\zz$.
	
	Let us stress the fact that assumptions \textit{(i)}-\textit{(ii)} are made to ease the above general definition, but it has to be shown that this does not raise any restriction on the class of graphs we are dealing with. We address this point throughout Appendix \ref{appendix}, where a wider discussion of the generality of Definition \ref{DEF-periodic} from the standpoint of graph theory is performed.

	\section{The subcritical regime $p\in(2,6)$}
	\label{sec:subcritical}
	
	\subsection{Preliminaries and compactness}
	\label{subsec:compactness}
	
	Before going on, let us briefly recall some known facts about the stationary nonlinear Schr\"odinger equation with a subcritical exponent that will play an important role in what follows.
	
	When $\G=\rr$, it is well-known (see for instance \cite{AST calc var}) that, for every mass $\mu>0$, there always exists a ground state, the \textit{soliton} $\phi_\mu$, given by
	
	\begin{equation}
		\label{EQ-phi mu def}
		\phi_\mu(x)=\mu^\alpha \phi_1(\mu^\beta x)\,,\qquad\alpha=\frac{2}{6-p},\quad\beta=\frac{p-2}{6-p}
	\end{equation}
	
	\noindent where $\phi_1$ is defined as
	
	\begin{equation}
		\label{EQ-phi 1 def}
		\phi_1(x)=A_p\text{sech}^{\alpha/\beta}(a_p x)
	\end{equation}
	
	\noindent with $A_p,a_p>0$. Moreover, $E(\phi_\mu,\rr)<0$, for every $\mu>0$.
	
	If $\G=\rr^+$ the behaviour is the same, with solitons replaced by the \textit{half-solitons}, namely their restrictions to the half-line.

	The forthcoming analysis makes use also of the Gagliardo-Nirenberg inequality
	
	\begin{equation}
		\label{EQ-GN p}
		\uLp^p\leq C_{\G,p}\uLtwo^{\frac{p}{2}+1}\udot^{\frac{p}{2}-1}
	\end{equation}
	
	\noindent holding for every non-compact graph $\G$, $u\in H^1(\G)$ and $p\geq 2$ (we refer to \cite{AST funct an} for further details). Here $C_{\G,p}>0$ depends only on $\G$ and $p$.
	We state now the following result, proving that globally minimizing sequences of the NLS
	energy functional are strongly compact in $\HmuG$ up to translations, whenever the infimum of
	\eqref{EQ-def energy intro} is strictly negative.
	
	\begin{prop}
		\label{PROP-compactness}
		Let $\G$ be a periodic graph, $p\in(2,6)$ and $\mu>0$. Let $\{u_n\}_{n\in\nn}\subset\HmuG$ be a minimizing sequence for $E$ such that, for every $n\in\nn$, there exists $x_n\in\K_0$ so that $\|u_n\|_{L^\infty(\G)}=u_n(x_n)$. If \eqref{EQ-inf negative THM 1} holds, then there exists $u\in\HmuG$ such that $u_n\to u$ strongly in $H^1(\G)$ and $u$ is a ground state.
	\end{prop}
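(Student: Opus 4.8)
The plan is a concentration–compactness argument: I would show that the bounded minimizing sequence $\{u_n\}$ neither vanishes nor splits, so that it converges, along a subsequence, strongly in $H^1(\G)$ to a ground state. First, $\{u_n\}$ is bounded in $H^1(\G)$: by the mass constraint and \eqref{EQ-GN p}, $E(u_n,\G)\ge\tfrac12\|u_n'\|_{L^2(\G)}^2-\tfrac{C_{\G,p}}{p}\mu^{\frac p4+\frac12}\|u_n'\|_{L^2(\G)}^{\frac p2-1}$, and the right-hand side is coercive in $\|u_n'\|_{L^2(\G)}$ since $\tfrac p2-1\in(0,2)$ (this also re-proves $\mathcal E_\G(\mu)>-\infty$). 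Up to a subsequence $u_n\rightharpoonup u$ in $H^1(\G)$, and since every edge is bounded with $H^1(I_e)\hookrightarrow C(\overline{I_e})$ compactly, $u_n\to u$ uniformly on each edge; hence $u$ is continuous at the vertices, $u\in H^1(\G)$, and $m:=\|u\|_{L^2(\G)}^2\le\mu$. Next I would check $u\not\equiv0$, using $\mathcal E_\G(\mu)<0$: for $n$ large, $E(u_n,\G)\le\tfrac12\mathcal E_\G(\mu)$ gives $\|u_n\|_{L^p(\G)}^p\ge-pE(u_n,\G)\ge-\tfrac p2\mathcal E_\G(\mu)>0$, while the elementary bound $\|u_n\|_{L^p(\G)}^p\le\|u_n\|_{L^\infty(\G)}^{p-2}\|u_n\|_{L^2(\G)}^2=\mu\,\|u_n\|_{L^\infty(\G)}^{p-2}$ forces $\|u_n\|_{L^\infty(\G)}\ge c>0$ uniformly in $n$. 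Since $\|u_n\|_{L^\infty(\G)}=u_n(x_n)$ with $x_n$ in the compact cell $\K_0$, along a subsequence $x_n\to x_\ast\in\K_0$, and uniform convergence on $\K_0$ yields $u(x_\ast)=\lim_n u_n(x_n)\ge c$. Thus $u\not\equiv0$ and $m\in(0,\mu]$; it remains to rule out $m<\mu$.

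Set $v_n:=u_n-u$. A Brezis–Lieb type decomposition, which I would adapt to $\G$ using that the edges have finite length, $u_n\to u$ a.e., $u\in L^p(\G)$ and $\{u_n\}$ is $H^1$-bounded, gives
\[
\|u_n\|_{L^2(\G)}^2=m+\|v_n\|_{L^2(\G)}^2+o(1),\qquad E(u_n,\G)=E(u,\G)+E(v_n,\G)+o(1),
\]
so $\|v_n\|_{L^2(\G)}^2\to\nu:=\mu-m$ and $\lim_nE(v_n,\G)=\mathcal E_\G(\mu)-E(u,\G)$. Assume $\nu>0$. Rescaling $v_n$ to mass $\nu$ (the rescaling factor tends to $1$ and $\{v_n\}$ is $H^1$-bounded) shows $\lim_nE(v_n,\G)\ge\mathcal E_\G(\nu)$, while $E(u,\G)\ge\mathcal E_\G(m)$ because $u\in H^1_m(\G)$; hence $\mathcal E_\G(\mu)\ge\mathcal E_\G(m)+\mathcal E_\G(\nu)$.

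The core of the argument — and the step I expect to be the main obstacle — is the \emph{strict} binding inequality $\mathcal E_\G(\mu)<\mathcal E_\G(m)+\mathcal E_\G(\nu)$ for all $0<m<\mu$, which contradicts the line above. I would deduce it from two facts: $\mathcal E_\G(s)\in(-\infty,0]$ for every $s>0$ (the bound $\le0$ by spreading the mass $s$ uniformly along longer and longer paths of $\G$, whose diameter is infinite), and the scaling lemma that $\mathcal E_\G(a)<0$ implies $\mathcal E_\G(ta)<t\,\mathcal E_\G(a)$ for every $t>1$. The latter follows by testing $\mathcal E_\G(ta)$ with $\widetilde w=\sqrt t\,w$, where $w\in H^1_a(\G)$ is near-optimal with $E(w,\G)<0$: since $E(w,\G)<0$ forces $\|w\|_{L^p(\G)}^p\ge-pE(w,\G)\ge c_0>0$ with $c_0$ depending only on $\mathcal E_\G(a)$,
\[
E(\widetilde w,\G)=t\,E(w,\G)-\frac{t^{p/2}-t}{p}\,\|w\|_{L^p(\G)}^p\le t\,E(w,\G)-\frac{t^{p/2}-t}{p}\,c_0 ,
\]
and passing to the infimum over such $w$ gives $\mathcal E_\G(ta)\le t\,\mathcal E_\G(a)-\tfrac{t^{p/2}-t}{p}c_0<t\,\mathcal E_\G(a)$. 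Applying the lemma with $a\in\{m,\nu\}$ and $t=\mu/a>1$ when $\mathcal E_\G(a)<0$, and using $\mathcal E_\G(\mu)<0$ directly when $\mathcal E_\G(m)$ or $\mathcal E_\G(\nu)$ vanishes, a short case distinction yields $\mathcal E_\G(m)+\mathcal E_\G(\nu)>\mathcal E_\G(\mu)$ in every case. Hence $\nu=0$, so $\|v_n\|_{L^2(\G)}\to0$ and $\|u\|_{L^2(\G)}^2=\mu$.

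To conclude, from $\|v_n\|_{L^2(\G)}\to0$, $H^1$-boundedness and \eqref{EQ-GN p} one gets $\|v_n\|_{L^p(\G)}\to0$, whence $E(u_n,\G)=E(u,\G)+\tfrac12\|v_n'\|_{L^2(\G)}^2+o(1)$; since $u\in\HmuG$, $E(u,\G)\ge\mathcal E_\G(\mu)=\lim_nE(u_n,\G)$, which forces $\|v_n'\|_{L^2(\G)}\to0$ and $E(u,\G)=\mathcal E_\G(\mu)$. Therefore $u_n\to u$ strongly in $H^1(\G)$ and $u$ is a ground state. The delicate point is that on a periodic graph the domain cannot be dilated, so strict subadditivity cannot be read off a scaling law as on $\rr$ and must instead be squeezed out of amplitude scaling alone — the uniform lower bound on $\|w\|_{L^p(\G)}^p$ being exactly what upgrades $\le$ to $<$; a further minor care is needed to legitimize the Brezis–Lieb splitting on the non-compact graph $\G$.
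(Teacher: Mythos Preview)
Your argument is correct and follows the same concentration--compactness skeleton as the paper: boundedness via Gagliardo--Nirenberg, non-vanishing from $\mathcal E_\G(\mu)<0$ together with the localization of the maxima in $\K_0$, and exclusion of dichotomy via Brezis--Lieb plus amplitude scaling. The only genuine difference is in how dichotomy is ruled out. You take the classical Lions route: establish the abstract strict subadditivity $\mathcal E_\G(\mu)<\mathcal E_\G(m)+\mathcal E_\G(\mu-m)$ from the scaling lemma $\mathcal E_\G(ta)<t\,\mathcal E_\G(a)$ (valid for $t>1$ when $\mathcal E_\G(a)<0$), combined with $\mathcal E_\G(s)\le0$ for all $s>0$ and a short case distinction. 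The paper instead rescales \emph{both} pieces $u$ and $u_n-u$ directly to mass $\mu$ and compares each with $\mathcal E_\G(\mu)$: from
\[
\mathcal E_\G(\mu)\le E\Big(\sqrt{\tfrac{\mu}{m}}\,u,\G\Big)<\tfrac{\mu}{m}\,E(u,\G),\qquad
\mathcal E_\G(\mu)\le E\Big(\tfrac{\sqrt{\mu}}{\|u_n-u\|_{L^2(\G)}}\,(u_n-u),\G\Big)<\tfrac{\mu}{\|u_n-u\|_{L^2(\G)}^2}\,E(u_n-u,\G)
\]
one reads off $E(u,\G)>\tfrac{m}{\mu}\mathcal E_\G(\mu)$ and $\liminf_n E(u_n-u,\G)\ge\tfrac{\mu-m}{\mu}\mathcal E_\G(\mu)$, and summing via Brezis--Lieb gives the contradiction $\mathcal E_\G(\mu)>\mathcal E_\G(\mu)$ in one line. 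The paper's variant is slightly shorter---no case split and no need for the auxiliary fact $\mathcal E_\G(s)\le0$---while your approach yields a reusable strict-subadditivity statement for the map $s\mapsto\mathcal E_\G(s)$.
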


	\begin{proof}
		Let $\{u_n\}_{n\in\nn}$ be a minimizing sequence for $E$ in $\HmuG$ as above. Plugging Gagliardo-Nirenberg inequality \eqref{EQ-GN p} in \eqref{EQ-def energy intro}, we have
		
		\begin{equation}
			\label{EQ-bound on energy GN}
			E(u_n,\G)\geq\frac{1}{2}\|u_n'\|_{L^2(\G)}^2-\frac{C_{\G,p}}{p}\mu^{\frac{p}{4}+\frac{1}{2}}\|u_n'\|_{L^2(\G)}^{\frac{p}{2}-1}=\frac{1}{2}\|u_n'\|_{L^2(\G)}^2\Big(1-\frac{2C_{\G,p}}{p}\mu^{\frac{p}{4}+\frac{1}{2}}\|u_n'\|_{L^2(\G)}^{\frac{p}{2}-3}\Big)
		\end{equation}
		
		\noindent and, since $p\in(2,6)$, this implies that $\{u_n\}_{n\in\nn}$ is bounded in $H^1(\G)$. Hence, there exists $u\in H^1(\G)$ so that (up to subsequences) $u_n\rightharpoonup u$ in $H^1(\G)$ and $u_n\to u$ in $L_{loc}^\infty(\G)$. Moreover, by weak lower semicontinuity,
		
		\begin{equation}
			\label{EQ-weak lower sem}
			\udot\leq\liminf_{n\to+\infty}\|u_n'\|_{L^2(\G)}\quad\text{and}\quad\uLtwo\leq\liminf_{n\to+\infty}\|u_n\|_{L^2(\G)}=\sqrt{\mu}\,.
		\end{equation}
		
		\noindent We first prove that $u\not\equiv0$. Suppose, by contradiction, $u\equiv 0$. Then, since $\|u_n\|_{L^\infty(\G)}=u_n(x_n)\to0$ when $n\to+\infty$, as $x_n\in\K_0$ for every $n\in\nn$,
		
		\begin{equation}
			\label{EQ-u^p to 0 with u  infty}
			\|u_n\|_{L^p(\G)}^p\leq\|u_n\|_{L^\infty(\G)}^{p-2}\mu\to0\qquad\text{for }n\to+\infty
		\end{equation}
		
		\noindent and 
		
		\[
		0>\mathcal{E}_\G(\mu)=\lim_{n\to+\infty}E(u_n,\G)\geq-\lim_{n\to+\infty}\frac{1}{p}\|u_n\|_{L^p(\G)}^p=0
		\]
		
		\noindent provides the contradiction we seek.
		
		Thus, either $0<\uLtwo^2<\mu$ or $\uLtwo^2=\mu$. If the latter case occurs, then $u\in\HmuG$, $u$ is a ground state of $E$ and $u_n\to u$ strongly in $H^1(\G)$. Let us thus prove that the former never happens, adapting the argument in the proof of Lemma 3.2 in \cite{ADST}.
		
		Suppose by contradiction $\uLtwo^2=:m<\mu$. By the Brezis-Lieb Lemma \cite{brezislieb}, we have, for $n$ sufficiently large
		
		\begin{equation}
			\label{EQ-brezislieb split}
			E(u_n,\G)=E(u_n-u,\G)+E(u,\G)+o(1)
		\end{equation}
		
		\noindent and by weak convergence in $L^2(\G)$ of $u_n$ to $u$,
		
		\begin{equation}
			\label{EQ-mass u_n-u}
			\begin{split}
			\|u_n-u\|_{L^2(\G)}^2=&\|u_n\|_{L^2(\G)}^2+\uLtwo^2-2<u_n,u>_{L^2(\G)}\\
			=&\mu-\uLtwo^2+o(1)=\mu-m+o(1)\,.
			\end{split}
		\end{equation}
		
		\noindent Therefore, we have
		
		\[
		\begin{split}
		\mathcal{E}_\G(\mu)\leq&E\Big(\frac{\sqrt{\mu}}{\|u_n-u\|_{L^2(\G)}}(u_n-u),\G\Big)\\
		=&\frac{1}{2}\frac{\mu}{\|u_n-u\|_{L^2(\G)}^2}\|u_n'-u'\|_{L^2(\G)}^2-\frac{1}{p}\frac{\mu^{\frac{p}{2}}}{\|u_n-u\|_{L^2(\G)}^p}\|u_n-u\|_{L^p(\G)}^p\\
		=&\frac{\mu}{\|u_n-u\|_{L^2(\G)}^2}\Big(\frac{1}{2}\|u_n'-u'\|_{L^2(\G)}^2-\frac{1}{p}\frac{\mu^{\frac{p}{2}-1}}{\|u_n-u\|_{L^2(\G)}^{p-2}}\|u_n-u\|_{L^p(\G)}^p\Big)\\
		<&\frac{\mu}{\|u_n-u\|_{L^2(\G)}}E(u_n-u,\G)
		\end{split}
		\]
		
		\noindent with the last inequality coming from the fact that $\|u_n-u\|_{L^2(\G)}^2<\mu$. Taking the liminf and combining with \eqref{EQ-mass u_n-u}, we get
		
		\begin{equation}
			\label{EQ-liminf geq inf}
			\liminf_{n\to+\infty}E(u_n-u,\G)\geq\frac{\mu-m}{\mu}\mathcal{E}_\G(\mu)\,.
		\end{equation}
		
		\noindent Moreover, similar calculations lead to
		
		\[
		\mathcal{E}_\G(\mu)\leq E\Big(\sqrt{\frac{\mu}{m}}u,\G\Big)=\frac{\mu}{m}\Big(\frac{1}{2}\udot^2-\frac{1}{p}\frac{\mu^{\frac{p}{2}-1}}{m^{\frac{p}{2}-1}}\uLp^p\Big)<\frac{\mu}{m}E(u,\G)
		\]
		
		\noindent that is
		
		\begin{equation}
			\label{EQ-E u >inf}
			E(u,\G)>\frac{m}{\mu}\mathcal{E}_\G(\mu)\,.
		\end{equation}
		
		\noindent Now, considering the liminf in \eqref{EQ-brezislieb split} and combining with \eqref{EQ-liminf geq inf}-\eqref{EQ-E u >inf}, we end up with
		
		\[
		\mathcal{E}_\G(\mu)=\liminf_{n\to+\infty}E(u_n,\G)=\liminf_{n\to+\infty}E(u_n-u,\G)+E(u,\G)>\frac{\mu-m}{\mu}\mathcal{E}_\G(\mu)+\frac{m}{\mu}\mathcal{E}_\G(\mu)=\mathcal{E}_\mu(\G)
		\]
		
		\noindent and we conclude.
	\end{proof}

	\subsection{Proof of Theorem \ref{THM 1 INTRO}}
	
	Note first that, given $\mu>0$, if we manage to prove that the infimum of the energy is strictly negative, then the existence of ground states is straightforward. Indeed, if \eqref{EQ-inf negative THM 1} holds, namely $\mathcal{E}_\G(\mu)<0$, then the statement of the theorem immediately follows from Proposition \ref{PROP-compactness}. Furthermore, it is readily seen that, for every $\mu>0$
	
	\[
	\mathcal{E}_\G(\mu)>-\infty\,.
	\]
	
	\noindent Indeed, \eqref{EQ-bound on energy GN} provides a lower bound for $E(u,\G)$ showing that $E(u,\G)\to+\infty$ if $\udot\to+\infty$, that is, $E$ is bounded from below for every $\mu>0$.
	
	We are thus left to prove that $\mathcal{E}_\G(\mu)$ is always negative, for every fixed $\mu$ and $\G$. As usual, let $\K$ be the periodicity cell of $\G$. We introduce
	
	\[
	L(\K):=\{e\in E(\K)\,:\,\exists\,!\,v\in D(\K)\,\text{such that }e\succ v\}
	\]
	
	\noindent as the set of edges of $\K$ with exactly one endpoint in $D(\K)$. Moreover, for every $e\in L(\K)$, we define the coordinate $x_e$ on $e$ so that, if $e\succ v$ and $v\in D(\K)$, then $x_e(0)=v$. Denote by $\ell:= \min_{e\in L(\K)}|e|$ the length of the smallest edge of $L(\K)$, and define
	
	\[
	\widetilde{\K}=\K-\bigcup_{e\in L(\K)}(e\cap[0,\ell])
	\]
	
	\noindent the portion of $\K$ that is left when we get rid of a path along each $e\in L(\K)$ of length $\ell$ starting at $x_e(0)$. Note that there may exist edges belonging to $\widetilde{\K}$ joining vertices in $D(\K)$. For the sake of simplicity, let us assume in the remainder of the proof that there is no such edge in $\widetilde{\K}$. Since all the forthcoming constructions straightforwardly generalize in the presence of this kind of edges, this does not reflect into any loss of generality, but it helps in simplifying some notation. 
	
	%%%%%%%%%%%%%%%FIGURE THEOREM%%%%%%%%%%%%%%%%%
	
	\begin{figure}[t]
		\centering
		\subfloat[][]{
			\includegraphics[width=0.4\columnwidth]{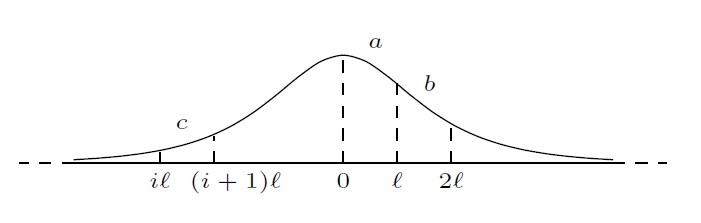}
		}\qquad
		\subfloat[][]{
			\includegraphics[width=0.5\columnwidth]{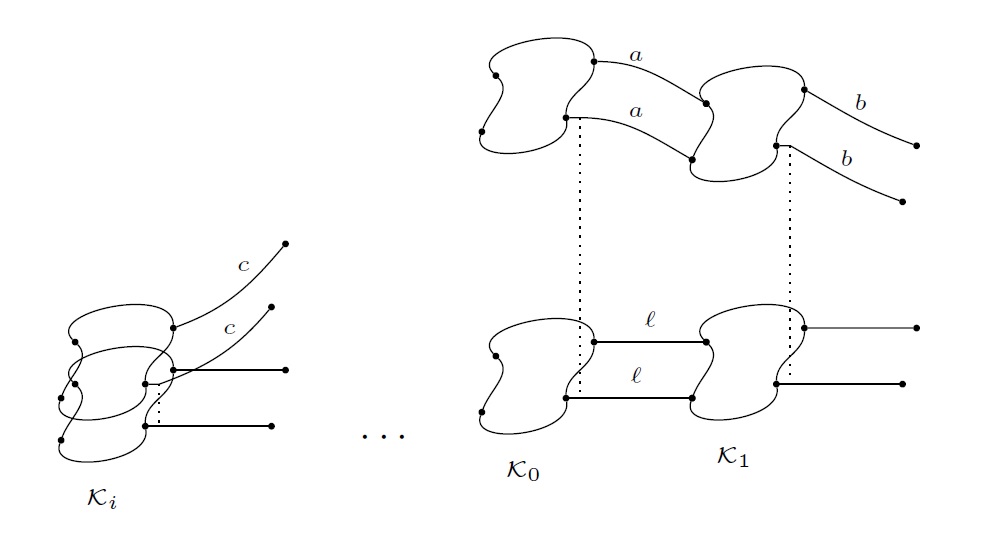}}
		\caption{(a) the soliton $\phi_{\mu_1}$ on the real line and (b) the corresponding $u$ on a periodic graph as in the proof of Theorem \ref{THM 1 INTRO}.}
		\label{FIG-soliton}
	\end{figure}
	
	%%%%%%%%%%%%%%%%%%%%%%%%%%%%%%%%%%%%%%%%%%%%%%%
	
	We then define the function
	
	\begin{equation}
		\label{EQ-u negative}
		u(x):=\begin{cases}
		\phi_{\mu_1}(x-i\ell) & \text{if }x\in e\cap[0,\ell],\,\text{for some }e\in L(\K_i),\, \text{for some }i\in\{-1,-2,...\}\\
		\phi_{\mu_1}(-(i+1)\ell) & \text{if }x\in\widetilde{\K}_i,\,\text{for some }i\in\{-1,-2,...\}\\
		\phi_{\mu_1}((i-1)\ell) & \text{if }x\in\widetilde{\K}_i,\,\text{for some }i\in\{0,1,...\}\\
		\phi_{\mu_1}((i-1)\ell-x) & \text{if }x\in e\cap[0,\ell],\,\text{for some }e\in L(\K_i),\,\text{for some }i\in\{0,1,...\}
		\end{cases}
	\end{equation}
	
	\noindent where $\phi_{\mu_1}\in H_{\mu_1}^1(\rr)$ is the soliton on $\rr$ of mass $\mu_1$ as in \eqref{EQ-phi mu def}, for some $\mu_1\in(0,\mu)$ that has to be chosen to ensure $\uLtwo^2=\mu$.
	
	Note that, setting $m:=|L(\K)|$, we have, by construction
	
	\begin{equation*}
		\label{EQ-energy soliton on G}
		E(u,\G)=mE(\phi_{\mu_1},\rr)+\sum_{i\in\zz}E(u,\widetilde{\K}_i)=mE(\phi_{\mu_1},\rr)-\frac{1}{p}\sum_{i\in\zz}\|u\|_{L^p(\widetilde{\K}_i)}^p<0\,.
	\end{equation*}
	
	\noindent Therefore, to conclude, let us show that for every $\mu>0$ there exists
	$\mu_1\in(0,\mu)$ such that $u$ as in \eqref{EQ-u negative} belongs to $\HmuG$.
	
	Denoting by $\Gamma:=|\widetilde{\K}|$, we have
	
	\begin{equation}
		\label{EQ-massu negative}
		\uLtwo^2=m\mu_1^2+\Gamma\sum_{i\in\zz}\phi_{\mu_1}^2(i\ell)=m\mu_1+\Gamma\Big(2\sum_{i=0}^{+\infty}\phi_{\mu_1}^2(i\ell)-\mu_1^{2\alpha}\Big)\,.
	\end{equation}
	
	\noindent Using the explicit formulas \eqref{EQ-phi mu def}-\eqref{EQ-phi 1 def}, observe that (up to some constant) $\phi_1^2(x)\sim e^{-2\frac{\alpha}{\beta}x}$, for $x$ large enough.
	
	Thus, 
	
	\[
	\sum_{i=0}^{+\infty}\phi_{\mu_1}^2(i\ell)\sim\mu_1^{2\alpha}\sum_{i=0}^{+\infty}e^{-2\frac{\alpha}{\beta}\mu_1^\beta i\ell}=\mu_1^{2\alpha}\frac{e^{2\frac{\alpha}{\beta}\mu_1^\beta \ell}}{e^{2\frac{\alpha}{\beta}\mu_1^\beta\ell}-1}\,.
	\]
	
	\noindent Plugging into \eqref{EQ-massu negative}, we get that $\uLtwo$ is a continuous function of $\mu_1$, $\uLtwo=0$ if $\mu_1=0$ and $\uLtwo\to+\infty$ as $\mu_1\to+\infty$, and we conclude.
	
	Dropping the assumption that no edge in $\widetilde{\K}$ joins vertices in $D(\K)$ only reflects in minor modifications of the above argument. Indeed, if $e\in\widetilde{\K}$ is an edge between $v,w\in D(\K)$, we modify \eqref{EQ-u negative} defining $u(x) = u(v_i) = u(w_i)$, for every $x\in e_i$ and every $i\in\zz$, where $e_i, v_i, w_i$
	denotes the copies of $e, v, w$ in $\K_i$ respectively. Keeping track of this new definition, all the previous calculations can be developed in the same way.
	\hspace{\stretch{1}} $\Box$

	\section{The critical regime $p=6$}
	\label{sec:critical}
	
	Let us focus now on the critical setting. Recall that, when $p=6$ (see \cite{AST critico}), a threshold value of the mass can be defined
	
	\begin{equation}
		\label{EQ-critical mass def}
		\mu_\G:=\sqrt{\frac{3}{C_\G}}\,,
	\end{equation}
	
	\noindent with $C_\G$ denoting the optimal constant in the Gagliardo-Nirenberg inequality
	
	\begin{equation}
		\label{EQ-critical GN}
		\uLsix^6\leq C_\G\uLtwo^4\udot^2\,.
	\end{equation}
	
	\noindent Plugging \eqref{EQ-critical GN} into the energy \eqref{EQ-def energy intro}, we get
	
	\begin{equation}
		\label{EQ-bound p=6 energy GN}
		E(u,\G)\geq\frac{1}{2}\udot^2\Big(1-\frac{C_\G}{3}\mu^2\Big)
	\end{equation}
	
	\noindent implying that
	
	\[
	\begin{split}
	\mu&\leq\mu_\G\quad\Longrightarrow\quad E(u,\G)\geq0,\,\forall\, u\in\HmuG\\
	\mu&>\mu_\G\quad\Longrightarrow\quad \exists\,u\in\HmuG\text{ such that }E(u,\G)<0\,.
	\end{split}
	\]
	
	\noindent If $\G=\rr$, then $\mu_\rr=\frac{\sqrt{3}}{2}\pi$,
	
	\begin{equation}
		\label{EQ-inf critical R}
		\mathcal{E}_\rr(\mu)=\begin{cases}
		0 & \text{if }\mu\leq\mu_\rr\\
		-\infty & \text{if }\mu>\mu_\rr
		\end{cases}
	\end{equation}
	
	\noindent and a whole family of critical solitons $\{\phi_\lambda\}_{\lambda>0}$ exists if and only if $\mu=\mu_\rr$, given by
	
	\begin{equation}
		\label{eq-crit sol}
		\phi_\lambda(x):=\sqrt{\lambda}\sqrt{\text{sech}\Big(\frac{2}{\sqrt{3}}\lambda x\Big)}\,.
	\end{equation}
	
	\noindent When $\G=\rr^+$, nothing changes, except of $\mu_{\rr^+}=\frac{\sqrt{3}}{4}\pi$ and ground states being the restriction of $\{\phi_\lambda\}_{\lambda>0}$ to the half-line.
	
	For a general non-compact metric graph $\G$, it is known (see Proposition 2.4 in \cite{AST critico}) that
	
	\begin{equation}
		\label{EQ-mu between rr and rr+}
		\mu_{\rr^+}\leq\mu_\G\leq\mu_\rr\,.
	\end{equation}
	
	\noindent Even though the original proof is developed for graphs with half-lines, it extends without any modifications to the periodic graphs we are dealing with.
	
	The following proposition provides a first topological characterization of $\mu_\G$ for periodic graphs.
	
	\begin{prop}
		\label{PROP-mu G periodic}
		Let $\G$ be a periodic graph. It holds that
		\begin{itemize}
			\item[(i)] if $\G$ satisfies (H$_{per}$), then $\mu_\G=\mu_\rr$;
			\item[(ii)] if $\G$ has a terminal edge, then $\mu_\G=\mu_{\rr^+}$.
		\end{itemize}
	\end{prop}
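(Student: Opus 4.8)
The statements (i) and (ii) are the ``missing halves'' of the two-sided bound \eqref{EQ-mu between rr and rr+}: recalling \eqref{EQ-critical mass def}, proving (i) amounts to showing $C_\G\le C_\rr$ (then $\mu_\G\ge\mu_\rr$, and \eqref{EQ-mu between rr and rr+} forces equality), while proving (ii) amounts to showing $C_\G\ge C_{\rr^+}$, $C_{\rr^+}$ being the sharp constant in \eqref{EQ-critical GN} on the half-line. Since the Gagliardo--Nirenberg ratio is scale invariant, $C_\G=\sup\{\uLsix^6/(\uLtwo^4\udot^2):u\in H^1(\G)\setminus\{0\}\}$, so (i) is an inequality to be verified for every $u$, whereas (ii) only requires a suitable test sequence. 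The plan is to adapt, respectively, the rearrangement argument and the concentration argument of \cite{AST critico}.

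For (i) I would argue by symmetric rearrangement onto $\rr$. Given $u\in H^1(\G)\setminus\{0\}$, replace $u$ by $|u|$ (which changes neither $\uLtwo$ nor $\uLsix$, and does not increase $\udot$) and assume $u\ge0$; set $M:=\|u\|_{L^\infty(\G)}$ and let $\rho(\tau):=|\{u>\tau\}|$ be the distribution function, finite for $\tau>0$ since $u\in L^2(\G)$. Let $\hat u:\rr\to\rr$ be the even, non-increasing-in-$|x|$ function with the same distribution function $\rho$; then $u$ and $\hat u$ are equimeasurable, so $\|\hat u\|_{L^q(\rr)}=\|u\|_{L^q(\G)}$ for all $q$, and the only thing to prove is the P\'olya--Szeg\H{o} bound $\|\hat u'\|_{L^2(\rr)}\le\udot$. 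By the coarea formula, for a.e.\ $\tau\in(0,M)$
\[
\udot^2=\int_0^M N(\tau)\,d\tau,\qquad N(\tau):=\sum_{u(x)=\tau}|u'(x)|,\qquad -\rho'(\tau)=\sum_{u(x)=\tau}\frac{1}{|u'(x)|},
\]
while $\|\hat u'\|_{L^2(\rr)}^2=\int_0^M \frac{4}{-\rho'(\tau)}\,d\tau$ (at each level $\hat u$ has exactly two preimages, of equal slope). Cauchy--Schwarz gives $N(\tau)\,(-\rho'(\tau))\ge n(\tau)^2$ with $n(\tau):=\#\{u(x)=\tau\}$, so it suffices to have $n(\tau)\ge2$ for a.e.\ $\tau\in(0,M)$ -- and this is where (H$_{per}$) enters: for $\tau<M$ pick $x_0$ with $u(x_0)>\tau$ and let $\Omega$ be the connected component of $\{u>\tau\}$ through $x_0$; by the characteristic property of (H$_{per}$) two edge-disjoint infinite paths issue from $x_0$, along each of which $u$ lies in $H^1(0,+\infty)$ and hence vanishes at infinity, so each must leave $\Omega$, producing two distinct points of $\{u=\tau\}$. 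Then $\hat u\in H^1(\rr)$ and the sharp Gagliardo--Nirenberg inequality on $\rr$ yields $\uLsix^6=\|\hat u\|_{L^6(\rr)}^6\le C_\rr\|\hat u\|_{L^2(\rr)}^4\|\hat u'\|_{L^2(\rr)}^2\le C_\rr\uLtwo^4\udot^2$, i.e.\ $C_\G\le C_\rr$, hence $\mu_\G=\mu_\rr$.

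For (ii), let $e$ be a terminal edge with degree-$1$ endpoint $v$, parametrised by $[0,\ell_e]$ with $x_e(v)=0$, and for large $\lambda$ set $u_\lambda:=\phi_\lambda$ on $[0,\ell_e/2]$, $u_\lambda(x):=\phi_\lambda(\ell_e/2)\,(\ell_e-x)/(\ell_e/2)$ on $[\ell_e/2,\ell_e]$, and $u_\lambda:=0$ on $\G\setminus e$, where $\phi_\lambda$ is the critical soliton \eqref{eq-crit sol}; $u_\lambda$ is continuous, vanishes at the other endpoint of $e$, so $u_\lambda\in H^1(\G)$. Since $\phi_\lambda$ concentrates at $0$ as $\lambda\to+\infty$ (with $\phi_\lambda(\ell_e/2)$ and its slope there exponentially small in $\lambda$), the linear piece on $[\ell_e/2,\ell_e]$ and the omitted tails $\int_{\ell_e/2}^{+\infty}$ of the half-soliton contribute negligibly, so that
\[
\|u_\lambda\|_{L^2(\G)}^2\to\mu_{\rr^+},\qquad
\frac{\|u_\lambda\|_{L^6(\G)}^6}{\|u_\lambda\|_{L^2(\G)}^4\,\|u_\lambda'\|_{L^2(\G)}^2}\longrightarrow C_{\rr^+}\,,
\]
the limiting value being the (scale-invariant) half-line ratio $\|\phi_\lambda|_{\rr^+}\|_{L^6(\rr^+)}^6/(\|\phi_\lambda|_{\rr^+}\|_{L^2(\rr^+)}^4\|(\phi_\lambda|_{\rr^+})'\|_{L^2(\rr^+)}^2)$, which equals $C_{\rr^+}$ because half-solitons optimise the half-line inequality. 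Hence $C_\G\ge C_{\rr^+}$, and with \eqref{EQ-mu between rr and rr+} we conclude $\mu_\G=\mu_{\rr^+}$.

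The crux is part (i): the genuinely delicate steps are the rigorous P\'olya--Szeg\H{o} inequality on a metric graph (the coarea formula, discarding the null set of degenerate or infinite fibres, the decay of $u$ along an edge-disjoint infinite path) and, above all, extracting from (H$_{per}$) the uniform two-preimages bound $n(\tau)\ge2$ -- this is exactly what upgrades the cheap estimate $C_\G\le1$ (obtainable from a single $L^\infty$ bound) to the sharp $C_\G\le C_\rr$. Part (ii) is comparatively routine, the only care being the bookkeeping of the exponentially small truncation errors.
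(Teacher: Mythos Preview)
Your argument is correct and matches the paper's: for (i) you use symmetric rearrangement onto $\rr$ together with the two-preimage property guaranteed by (H$_{per}$) to get $C_\G\le C_\rr$, and for (ii) you concentrate a near-optimal profile on the terminal edge to get $C_\G\ge C_{\rr^+}$, combining each with \eqref{EQ-mu between rr and rr+}. The only cosmetic difference is in (ii): the paper avoids your truncation-error bookkeeping by starting from an arbitrary compactly supported $u\in H^1(\rr^+)$ with Gagliardo--Nirenberg ratio $>C_{\rr^+}-\varepsilon$ and using the scale invariance $u_\lambda(x)=\sqrt{\lambda}\,u(\lambda x)$ to shrink its support inside $e$ exactly, which is a bit cleaner.
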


	\begin{proof}
		Let us first deal with part \textit{(i)}. Note that, since $\G$ satisfies assumption (H$_{per}$), then, for every $u\in\HmuG$ and almost every $t$ in the image of $u$, we have
		
		\[
		\symbol{35}\{x\in\G\,:\,u(x)=t\}\geq2\,,
		\]
		
		\noindent that is, any value in the image of $u$ has at least two pre-images on $\G$. Indeed, let $M := \|u\|_{L^\infty(\G)}$ and $\overline{x}\in\G$ be such that $u(\overline{x})=M$. Then, by (H$_{per}$), there exist two disjoint paths of infinite
		length originating at $\overline{x}$, say $\Gamma_1,\Gamma_2$, and since $u\in H^1(\G)$, $u(\Gamma_1)=u(\Gamma_2)=(0,M)$.
		
		Hence, by standard properties of symmetric rearrangements (see \cite{AST calc var}), denoting by $\widehat{u}\in H^1(\rr)$ the symmetric rearrangement of $u\in H^1(\G)$ on the line, it follows
		
		\[
		\frac{\uLsix^6}{\uLtwo^4\udot^2}\leq\frac{\|\widehat{u}\|_{L^6(\rr)}^6}{\|\widehat{u}\|_{L^2(\rr)}^4\|(\widehat{u})'\|_{L^2(\rr)}^2}\leq C_\rr
		\]
		
		\noindent for every $u\in H^1(\G)$, and taking the supremum
		
		\[
		C_\G\leq C_\rr\,.
		\]
		
		\noindent By \eqref{EQ-critical mass def}, this means
		
		\[
		\mu_\G\geq\mu_\rr
		\]
		
		\noindent and, combining with \eqref{EQ-mu between rr and rr+}, we conclude.
		
		Let us focus now on statement \textit{(ii)}. For every $\varepsilon > 0$, there exists $u\in H^1(\rr^+)$, supported on $[0, 1]$, so that
		
		\[
		\frac{\|u\|_{L^6(\rr^+)}^6}{\|u\|_{L^2(\rr^+)}^4\|u'\|_{L^2(\rr^+)}^2}> C_{\rr^+}-\varepsilon\,.
		\]
		
		\noindent Setting $u_\lambda(x):=\sqrt{\lambda}u(\lambda x)$, for every $x\in\rr^+$ and $\lambda>0$, we have that $u_\lambda\in H^1(\rr^+)$ and
		
		\[
		\begin{split}
		\text{supp}\,u_\lambda=&\big[0,\frac{1}{\lambda}\big]\\
		\frac{\|u_\lambda\|_{L^6(\rr^+)}^6}{\|u_\lambda\|_{L^2(\rr^+)}^4\|u_\lambda'\|_{L^2(\rr^+)}^2}=&\frac{\|u\|_{L^6(\rr^+)}^6}{\|u\|_{L^2(\rr^+)}^4\|u'\|_{L^2(\rr^+)}^2}\,.
		\end{split}
		\]
		
		\noindent Hence, letting $e\in E(\G)$ be a terminal edge of $\G$ and $\ell:=|e|$ its length, when $\lambda$ is large enough, $\text{supp}\,u_\lambda\subset e$, and defining $v\in H^1(\G)$ so that $v\equiv u_\lambda$ on $e$ and $v\equiv 0$ elsewhere, we
		deduce
		
		\[
		C_\G>C_{\rr^+}-\varepsilon
		\]
		
		\noindent and, by the arbitrariness of $\varepsilon$,
		
		\[
		C_\G\geq C_{\rr^+}\,.
		\]
		
		\noindent Combining with \eqref{EQ-critical mass def} and \eqref{EQ-mu between rr and rr+} gives the claim.
		
	\end{proof}
	
	As a direct consequence, we are now able to prove the first of our main results in the
	critical setting.
	
	\begin{proof}[Proof of Theorem \ref{THM 2 INTRO}] We begin by proving statement \textit{(i)}. Let $\G$ be a periodic graph of periodicity cell $\K$ satisfying (H$_{per}$).
	
	By Proposition \ref{PROP-mu G periodic}\textit{(i)}, $\mu_\G=\mu_\rr$, and \eqref{EQ-bound p=6 energy GN} ensures that, for every $\mu\leq\mu_\rr$ and $u\in\HmuG$
	
	\begin{equation}
		\label{EQ-E geq 0}
		E(u,\G)\geq0\,.
	\end{equation}
	
	\noindent Now, for every $n\in\nn$, we introduce 
	
	\begin{equation}
		\label{EQ-def Sigma}
		\Sigma_n:=\{e\in E(\K_{-n-1})\cup E(\K_{n+1})\,:\,\exists\,v\in R(\K_{-n})\cup D(\K_n)\text{ such that }e\succ v\}
	\end{equation}
	
	\noindent as the set of all edges in $\G$ entering a vertex which is joining either $\K_{-n-1}$ with $\K_{-n}$ or $\K_{n+1}$ with $\K_n$. Moreover, for every $e\in\Sigma_n$ and $e\succ v\in R(\K_{-n})\cup D(\K_n)$, we set $x_e(0)=v$, being $x_e$ the coordinate defined on $e$. Let also $\ell_e:=|e|$ denote the length of $e$.
	
	Then, for every $n\in\nn$, we define $u_n\in\HmuG$ as
	
	\begin{equation}
		\label{EQ-def u n to zero energy}
		u_n(x):=\begin{cases}
		\alpha_n & \text{if }x\in\K_i,\text{ for some }i\in\{-n,...,n\}\\
		\frac{\alpha_n}{\ell_e}(\ell_e-x) & \text{if }x\in e,\text{ for some }e\in\Sigma_n\\
		0 & \text{otherwise on }\G
		\end{cases}
	\end{equation}
	
	\noindent where $\alpha_n$ is chosen to satisfy $\|u_n\|_{L^2(\G)}^2=\mu$. It is immediate to see that $\alpha_n\to0$ as $n\to+\infty$, thus implying $E(u_n,\G)\to0$ and
	
	\[
	\mathcal{E}_\G(\mu)=0
	\]
	
	\noindent for every $\mu\leq\mu_\rr$.
	
	When $\mu>\mu_\rr$, by \eqref{EQ-inf critical R} there exists $v\in H_\mu^1(\rr)$, supported on $[0,1]$, so that $E(v,\rr)<0$. Considering the mass-preserving transformation
	
	\[
	v_\lambda(x):=\sqrt{\lambda}v(\lambda x)
	\]
	
	\noindent for every $\lambda>0$, we get $v_\lambda\in H_\mu^1(\rr)$, $v_\lambda$ is supported on $[0,1/\lambda]$ and $E(v_\lambda,\rr)=\lambda^2 E(v,\rr)$.
	
	Fix now any edge $e\in E(\G)$ and let $\ell:=|e|$ be its length. Then, there exists $\overline{\lambda}>0$ such
	that, for $\lambda\geq\overline{\lambda}$, $v_\lambda\in H_\mu^1(0,\ell)$, that is, we construct functions $\{v_\lambda\}_{\lambda\geq\overline{\lambda}}\subset\HmuG$, supported
	on $e$ and satisfying
	
	\[
	E(v_\lambda,\G)\to-\infty\qquad\text{for }\lambda\to+\infty\,,
	\]
	
	\noindent proving that, for every $\mu>\mu_\rr$
	
	\[
	\mathcal{E}_\G(\mu)=-\infty\,.
	\]
	
	\noindent We conclude showing that the infimum is not attained, for any value of the mass $\mu\leq\mu_\rr$ (the statement is trivially true in the regime $\mu>\mu_\rr$).
	
	When $\mu<\mu_\rr$, the result is immediate, the inequality in \eqref{EQ-E geq 0} being strict for every $u\in\HmuG$.
	
	If $\mu=\mu_\rr$, suppose by contradiction that $u\in H_{\mu_\rr}^1(\G)$ is a ground state, i.e. $E(u,\G)=\mathcal{E}_{\G}(\mu_\rr)=0$. This implies
	
	\[
	0=E(u,\G)\geq E(\widehat{u},\rr)\geq \mathcal{E}_\rr(\mu_\rr)=0\,,
	\]
	
	\noindent that is, $E(u,\G)=E(\widehat{u},\rr)$ and particularly $\udot=\|\widehat{u}'\|_{L^2(\rr)}$. But this is impossible, since $\G$ contains at least one vertex of degree 3, preventing $\symbol{35}\{x\in\G\,:\,u(x)=t\}=2$ to be true for a.e. $t$ in the image of $u$.
	
	Part \textit{(ii)} of Theorem \ref{THM 2 INTRO} can be proved by the same argument, simply replacing $\mu_\rr$ with $\mu_{\rr^+}$ and symmetric rearrangements with decreasing ones whenever needed.
	\end{proof}

	We then turn our attention to graphs violating (H$_{per}$) and with no terminal edge, providing the proof of Theorem \ref{THM 3 INTRO}. To this purpose, a modified version of the Gagliardo-Nirenberg inequality has to be considered, ensuring that, for every $\mu\in[0,\mu_\rr]$ and every $u\in\HmuG$, there exists $\theta_u:=\theta(u)$, with $\theta_u\in[0,\mu]$, such that
	
	\begin{equation}
		\label{EQ-modified GN}
		\uLsix^6\leq3\Big(\frac{\mu-\theta_u}{\mu_\rr}\Big)^2\udot^2+C\sqrt{\theta_u}
	\end{equation}
	
	\noindent with $C>0$ depending only on $\G$ (see Lemma 4.4 in \cite{AST critico} for a proof that extends to periodic graphs without modifications).
	
	\medskip
	\begin{proof}[Proof of Theorem \ref{THM 3 INTRO}] By the same argument in the proof of Theorem \ref{THM 2 INTRO}, we have
	
	\[
	\begin{split}
	\mathcal{E}_\G(\mu)=&-\infty\qquad\,\forall\mu>\mu_\rr\\
	\mathcal{E}_\G(\mu)=&0\qquad\qquad\forall\mu<\mu_\G	\end{split}
	\]
	
	\noindent and ground states do not exist in both these situations.
	
	Consider now $\mu\in(\mu_\G,\mu_\rr]$. For every $\varepsilon>0$, there exists $u\in\HmuG$ so that
	
	\[
	\frac{\uLsix^6}{\uLtwo^4\udot^2}>C_\G-\varepsilon
	\]
	
	\noindent and plugging into the energy
	
	\[
	E(u,\G)\leq\frac{1}{2}\udot^2\Big(1-\frac{C_\G-\varepsilon}{3}\mu^2\Big)\,.
	\]
	
	\noindent Thus, picking $\varepsilon$ small enough and since $\mu>\mu_\G$
	
	\begin{equation}
		\label{EQ-inf negative}
		\mathcal{E}_\G(\mu)<0\qquad\forall\mu\in(\mu_\G,\mu_\rr]\,.
	\end{equation}
	
	\noindent Let $\{u_n\}_{n\in\nn}\subset\HmuG$ be a minimizing sequence for $E$, so that each $u_n$ realizes its $L^\infty$ norm at some point of $\K_0$. Then, by \eqref{EQ-inf negative} and \eqref{EQ-modified GN}, we have
	
	\[
	\|u_n'\|_{L^2(\G)}^2<\frac{1}{3}\|u_n\|_{L^6(\G)}^6\leq\Big(\frac{\mu-\theta_{u_n}}{\mu_\rr}\Big)^2\|u_n'\|_{L^2(\G)}^2+C\sqrt{\theta_{u_n}}\leq\Big(\frac{\mu-\theta_{u_n}}{\mu_\rr}\Big)^2\|u_n'\|_{L^2(\G)}^2+C\sqrt{\mu}
	\]
	
	\noindent that is 
	
	\begin{equation}
		\label{EQ-bound u'}
		\Big(1-\frac{(\mu-\theta_{u_n})^2}{\mu_\rr^2}\Big)\|u_n'\|_{L^2(\G)}^2\leq C\sqrt{\mu}\,.
	\end{equation}
	
	\noindent On the other hand, plugging \eqref{EQ-modified GN} into \eqref{EQ-def energy intro}
	
	\begin{equation}
	\label{EQ-lower bound E proof 1.3}
	E(u_n,\G)\geq\frac{1}{2}\|u_n'\|_{L^2(\G)}^2\Big(1-\frac{(\mu-\theta_{u_n})^2}{\mu_\rr^2}\Big)-C\sqrt{\theta_{u_n}}
	\end{equation}
	
	\noindent and, since the right-hand side tends to 0 when $\theta_{u_n}\to0$, combining with $\eqref{EQ-bound u'}$ and the minimality of $\{u_n\}_{n\in\nn}$ gives
	
	\begin{equation}
		\label{EQ-inf theta not 0}
		\inf_{n\in\nn}\theta_{u_n}>0\,.
	\end{equation}
	
	\noindent Hence, \eqref{EQ-bound u'}-\eqref{EQ-inf theta not 0} ensure that $\{u_n\}_{n\in\nn}$ is bounded in $H^1(\G)$ and $u_n\rightharpoonup u$, for some $u\in H^1(\G)$, whereas \eqref{EQ-lower bound E proof 1.3} guarantees that $\mathcal{E}_\G(\mu)>-\infty$. The argument of the proof of Proposition \ref{PROP-compactness} can now be repeated, showing that $u_n\to u$ strongly in $\HmuG$ and $u$ is a ground state of mass $\mu$.
	
	It remains to deal with the case $\mu=\mu_\G$. Note that this is not immediate, since now
	$\mathcal{E}_\G(\mu_\G)=0$, while the negativity of the ground state energy level is crucial in the above argument. Actually, it is no longer true that every minimizing sequence is strongly compact, vanishing sequences as in \eqref{EQ-def u n to zero energy} providing an example of the possible loss of compactness. However, we show that there exists a proper choice of the minimizing sequence recovering compactness.
	
	Let $\{u_n\}_{n\in\nn}\subset H_{\mu_\G}^1(\G)$ be a maximizing sequence for the Gagliardo-Nirenberg inequality \eqref{EQ-critical GN}, i.e.,
	
	\begin{equation}
		\label{EQ-quotient to 3}
		\lim_{n\to+\infty}\frac{\|u_n\|_{L^6(\G)}^6}{\|u_n'\|_{L^2(\G)}^2}\to C_\G \mu_\G^2=3
	\end{equation}
	
	\noindent and assume as usual that, for every $n$, $u_n$ attains its maximum somewhere in $\K_0$. It is straightforward to see that $\{u_n\}_{n\in\nn}$ is a minimizing sequence for $E$, being
	
	\[
	E(u_n,\G)=\frac{1}{2}\|u_n'\|_{L^2(\G)}^2\Big(1-\frac{\|u_n\|_{L^6(\G)}^6}{3\|u_n'\|_{L^2(\G)}^2}\Big)\to0\qquad\text{as }n\to+\infty\,.
	\]
	
	\noindent Now, by \eqref{EQ-modified GN}, we have
	
	\[
	\begin{split}
	3\|u_n'\|_{L^2(\G)}^2=\|u_n\|_{L^6(\G)}^6+o(1)\leq& 3\Big(\frac{\mu_\G-\theta_{u_n}}{\mu_\rr}\Big)^2\|u_n'\|_{L^2(\G)}^2+C\sqrt{\theta_{u_n}}+o(1)\\
	\leq&3\frac{\mu_\G^2}{\mu_\rr^2}\|u_n'\|_{L^2(\G)}^2+C\sqrt{\mu_\G}+o(1)
	\end{split}
	\]
	
	\noindent that is
	
	\[
	3\Big(1-\frac{\mu_\G^2}{\mu_\rr^2}\Big)\|u_n'\|_{L^2(\G)}^2\leq C\sqrt{\mu_\G}\,.
	\]
	
	\noindent As $\mu_\G<\mu_\rr$, this means that $\{u_n\}_{n\in\nn}$ is bounded in $H^1(\G)$, $u_n\rightharpoonup u$ and $u_n\to u$ in $L_{loc}^\infty(\G)$ as $n\to+\infty$, for some $u\in H^1(\G)$.
	
	Suppose $u\equiv 0$. Then $u_n\to0$ in $L^\infty(\G)$ and $u_n\to0$ in $L^p(\G)$, for every $p>2$. Therefore, by \eqref{EQ-quotient to 3}, $\|u_n'\|_{L^2(\G)}\to0$ for $n\to+\infty$.
	
	Let now $\mathcal{B}$ denote the set of all edges $e\in E(\K_0)$ such that, if removed, give birth to at least one compact connected component. Since $\G$ violates (H$_{per}$), $\mathcal{B}\not=0$. Note that, if there exist values in the image of $u_n$ with just one pre-image on $\G$, then they can only be attained somewhere in $\mathcal{B}$. Indeed, if $t<\min_{x\in\K_0}u_n(x)$, then $t$ has at least two pre-images, one on the
	left of $\K_0$ and one on the right. Moreover, if $t$ is attained at a point belonging to a cycle of $\K_0$, then it is attained twice on that cycle and it has at least two pre-images in $\K_0$.
	
	Let us introduce the following construction. We first double the edges of $\mathcal{B}$, that is, we replace each $e\in\mathcal{B}$ by two edges, say $e_1,e_2$, joining the same vertices of $e$ and so that $|e_1|=|e_2|=2|e|$. Then, both on $e_1$ and $e_2$, we stretch the restriction of $u_n$ to $e$ by a factor 2. Letting $\widetilde{\G}$ and $\widetilde{u}_n$ denote respectively the new graph and function identified by this procedure, we have, for every $n\in\nn$, $\widetilde{u}_n\in H^1(\widetilde{\G})$ and
	
	\[
	\begin{split}
	\widetilde{u}_n(x)=&u_n(x)\quad\qquad\text{if }x\in\G/\mathcal{B}\\
	\widetilde{u}_n(x)=&u_n(x/2)\qquad\text{if }x\in e_i\text{ for some }i\in\{1,2\}\text{ and }e\in\mathcal{B}\,.
	\end{split}
	\]
	
	\noindent Note that
	
	\begin{equation}
		\label{EQ-u' doubled}
		\begin{split}
		\|\widetilde{u}_n'\|_{L^2(\widetilde{\G})}^2=&\int_{\G/\mathcal{B}}|u_n'|^2\,dx+\frac{1}{2}\sum_{e\in\mathcal{B}}\int_{0}^{2|e|}|u_n'(x/2)|^2\,dx\\
		=&\int_{\G/\mathcal{B}}|u_n'|^2\,dx+\sum_{e\in\mathcal{B}}\int_e|u_n'|^2\,dx=\|u_n'\|_{L^2(\G)}^2		
		\end{split}
	\end{equation}
	
	\noindent Moreover, by construction, every value $t$ in the image of $\widetilde{u}_n$ is now attained at least twice on $\widetilde{\G}$. Therefore, by properties of the symmetric rearrangements and Gagliardo-Nirenberg inequality \eqref{EQ-critical GN} on $\rr$,
	
	\[
	\|\widetilde{u}_n\|_{L^6(\widetilde{\G})}^6=\|\widehat{(\widetilde{u}_n)}\|_{L^6(\rr)}^6\leq C_\rr\|\widehat{(\widetilde{u}_n)}\|_{L^2(\rr)}^4\|\widehat{(\widetilde{u}_n)}'\|_{L^2(\rr)}^2\leq C_\rr \|\widetilde{u}_n\|_{L^2(\widetilde{\G})}^4\|\widetilde{u}_n'\|_{L^2(\widetilde{\G})}^2
	\]
	
	\noindent that, combined with \eqref{EQ-u' doubled} and the fact that
	
	\[
	\begin{split}
	\|\widetilde{u}_n\|_{L^6(\widetilde{\G})}^6=&\|u_n\|_{L^6(\G)}^6+3\sum_{e\in\mathcal{B}}\int_e|u_n|^6\,dx\\
	\|\widetilde{u}_n\|_{L^2(\widetilde{\G})}^2=&\|u_n\|_{L^2(\G)}^2+3\sum_{e\in\mathcal{B}}\int_e|u_n|^2\,dx=\mu_\G+3\sum_{e\in\mathcal{B}}\int_e|u_n|^2\,dx
	\end{split}
	\]
	
	\noindent gives 
	
	\begin{equation}
	\label{EQ}
	\begin{split}
	\|u_n\|_{L^6(\G)}^6\leq&\|\widetilde{u}_n\|_{L^6(\widetilde{\G})}^6\leq C_\rr\Big(\mu_\G+3\sum_{e\in\mathcal{B}}\int_e|u_n|^2\,dx\Big)^2\|\widetilde{u}_n'\|_{L^2(\widetilde{\G})}^2\\
	\leq&C_\rr\Big(\mu_\G+3\sum_{e\in\mathcal{B}}\int_e|u_n|^2\,dx\Big)^2\|u_n'\|_{L^2(\G)}^2=\frac{3}{\mu_\rr^2}\Big(\mu_\G+3\sum_{e\in\mathcal{B}}\int_e|u_n|^2\,dx\Big)^2\|u_n'\|_{L^2(\G)}^2\,.
	\end{split}
	\end{equation}
	
	\noindent Now, since $u_n\to0$ in $L^\infty(\G)$ and $\mathcal{B}\subset\K_0$, then $\sum_{e\in\mathcal{B}}\int_e|u_n|^2\,dx\to0$ as $n\to+\infty$, and \eqref{EQ} implies, for n sufficiently large
	
	\[
	\frac{\|u_n\|_{L^6(\G)}^6}{\|u_n'\|_{L^2(\G)}^2}\leq3\frac{\mu_\G^2}{\mu_\rr^2}+o(1)
	\]
	
	\noindent that, coupled with \eqref{EQ-quotient to 3} and $\mu_\G<\mu_\rr$, provides a contradiction. Hence, $u\not\equiv0$.
	
	Repeating the same calculations we performed in the proof of Proposition \ref{PROP-compactness} allows to
	rule out also the case $\uLtwo^2<\mu$. Thus, $\uLtwo^2=\mu$, $u_n\to u$ strongly in $\HmuG$ and $u$	is a ground state.
	\end{proof}
		
	Notice that, as already highlighted in the Introduction, the assumption $\mu_\G<\mu_\rr$ in Theorem \ref{THM 3 INTRO} is essential, but we are not able to tell whether it is automatically satisfied by all graphs that do not fulfil (H$_{per}$) and without terminal edges. Nevertheless, we conclude this section with the following proposition, showing that the class of graphs for which $\mu_\G<\mu_\rr$ holds true is nonempty.
	
	\begin{prop}
		\label{prop-signpost} Let $\G$ be the periodic graph in Figure \ref{FIG-critical}(c). Then $\mu_\G<\mu_\rr$.
	\end{prop}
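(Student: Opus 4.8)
The plan is to prove the equivalent inequality $C_\G>C_\rr$ for the optimal Gagliardo--Nirenberg constants in \eqref{EQ-critical GN}; by \eqref{EQ-critical mass def} this gives $\mu_\G=\sqrt{3/C_\G}<\sqrt{3/C_\rr}=\mu_\rr$. The idea is to fold a critical soliton $\phi_\lambda$ (from \eqref{eq-crit sol}) onto $\G$ so that the resulting test function keeps, up to a negligible correction, the $L^2$ and $L^6$ norms of $\phi_\lambda$ but has \emph{strictly smaller} kinetic energy than $\phi_\lambda$ has on $\rr$. Since every $\phi_\lambda$ is a ground state of $E$ on $\rr$ at the critical mass, $E(\phi_\lambda,\rr)=0$ and $\|\phi_\lambda\|_{L^2(\rr)}^2=\mu_\rr$, whence $\|\phi_\lambda\|_{L^6(\rr)}^6=3\|\phi_\lambda'\|_{L^2(\rr)}^2=C_\rr\,\mu_\rr^2\|\phi_\lambda'\|_{L^2(\rr)}^2$; that is, $\phi_\lambda$ realises equality in \eqref{EQ-critical GN} on $\rr$ for every $\lambda>0$, so any such improvement pushes the quotient strictly above $C_\rr$.

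Concretely, regard the graph of Figure~\ref{FIG-critical}(c) as an infinite backbone path with vertices $(v_i)_{i\in\zz}$, carrying at each $v_i$ a pendant edge of length $\ell$ ending at a vertex $w_i$ and a self-loop of length $2a$ based at $w_i$; write $d_i$ for the length of the backbone arc from $v_0$ to $v_i$ and $d_\ast:=\min_{i\neq 0}d_i>0$. Define $u_\lambda$ by: on the loop at $w_0$, symmetric about its apex $P_0$, put $u_\lambda=\phi_\lambda(\rho)$ at arclength $\rho\in[0,a]$ from $P_0$; on the pendant at $v_0$ put $u_\lambda=\phi_\lambda(a+\rho/2)$ at arclength $\rho\in[0,\ell]$ from $w_0$ (a factor-two dilation, matching at $w_0$ and giving $u_\lambda(v_0)=\phi_\lambda(a+\ell/2)$); on the backbone put $u_\lambda=\phi_\lambda(a+\ell/2+\rho)$ at backbone distance $\rho\ge 0$ from $v_0$ in either direction, so $u_\lambda(v_i)=t_i:=\phi_\lambda(a+\ell/2+d_i)$; and $u_\lambda\equiv t_i$ on the pendant edge together with its loop attached at $v_i$, for $i\neq 0$. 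This $u_\lambda$ is continuous, hence $u_\lambda\in H^1(\G)$. A change of variables on each piece (using that $\phi_\lambda$ is even) gives, for $q\in\{2,6\}$,
\[
\|u_\lambda\|_{L^q(\G)}^q=\|\phi_\lambda\|_{L^q(\rr)}^q+(\ell+2a)\sum_{i\neq 0}t_i^q,\qquad
\|u_\lambda'\|_{L^2(\G)}^2=\|\phi_\lambda'\|_{L^2(\rr)}^2-\frac32\int_a^{a+\ell/2}|\phi_\lambda'|^2\,dx .
\]
The dilation is calibrated exactly so that the \emph{singly covered} pendant recovers the full $L^q$-mass that those levels carry on the (doubly covered) line, while its kinetic contribution drops to one quarter of $|\phi_\lambda'|^2$ spread over twice the length, whence the strict kinetic gain $\frac32\int_a^{a+\ell/2}|\phi_\lambda'|^2>0$.

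Setting $I_\lambda:=\int_a^{a+\ell/2}|\phi_\lambda'|^2$ and $\tau:=(\ell+2a)\sum_{i\neq 0}t_i^2$, and using $\|\phi_\lambda\|_{L^6(\rr)}^6=C_\rr\,\mu_\rr^2\|\phi_\lambda'\|_{L^2(\rr)}^2$ together with the positivity of the extra $L^6$ mass, the bound $\|u_\lambda\|_{L^6(\G)}^6>C_\rr\|u_\lambda\|_{L^2(\G)}^4\|u_\lambda'\|_{L^2(\G)}^2$ reduces to $(\mu_\rr+\tau)^2\bigl(\|\phi_\lambda'\|_{L^2(\rr)}^2-\frac32 I_\lambda\bigr)<\mu_\rr^2\|\phi_\lambda'\|_{L^2(\rr)}^2$; since $(\mu_\rr+\tau)^2\ge\mu_\rr^2$, it suffices to prove $(2\mu_\rr\tau+\tau^2)\|\phi_\lambda'\|_{L^2(\rr)}^2<\frac32\mu_\rr^2 I_\lambda$. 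This is the only genuinely delicate point, and where I expect the main obstacle: one must show that the true but exponentially small kinetic gain $\frac32 I_\lambda$ beats the (also exponentially small) price $\tau$ of the constant pieces on the other pendants-with-loops. It holds because those pieces lie a fixed distance $d_\ast>0$ \emph{farther out} along the backbone than the pendant at $v_0$: from $\phi_\lambda(x)=\sqrt\lambda\,\phi_1(\lambda x)$ and $|\phi_1(y)|,|\phi_1'(y)|\asymp e^{-y/\sqrt 3}$ as $y\to+\infty$ one gets $\|\phi_\lambda'\|_{L^2(\rr)}^2\asymp\lambda^2$, $I_\lambda\gtrsim\lambda^2 e^{-2\lambda a/\sqrt 3}$, and $\tau\lesssim\lambda\,e^{-\frac{2}{\sqrt 3}\lambda(a+\ell/2+d_\ast)}$, so the left-hand side is $O\bigl(\lambda^{3}e^{-\frac{2}{\sqrt 3}\lambda(a+\ell/2+d_\ast)}\bigr)=o\bigl(\lambda^2 e^{-2\lambda a/\sqrt 3}\bigr)$ since $\ell/2+d_\ast>0$. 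Hence the inequality holds for $\lambda$ large; then $C_\G\ge\|u_\lambda\|_{L^6(\G)}^6/\bigl(\|u_\lambda\|_{L^2(\G)}^4\|u_\lambda'\|_{L^2(\G)}^2\bigr)>C_\rr$, and therefore $\mu_\G<\mu_\rr$.
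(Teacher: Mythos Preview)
Your proof is correct and follows essentially the same approach as the paper: both fold a critical soliton $\phi_\lambda$ onto the central loop, pendant and backbone, set it constant on the remaining pendants-with-loops, and exploit the strict kinetic gain coming from the fact that the singly covered pendant edge carries the $L^q$-mass of a doubly covered interval of $\rr$ at a quarter of the kinetic cost; the decisive asymptotic comparison---that this gain, of order $\lambda^2 e^{-2\lambda a/\sqrt{3}}$, dominates the $L^2$-correction $\tau$ from the constant pieces, of order $\lambda e^{-\frac{2}{\sqrt{3}}\lambda(a+\ell/2+d_\ast)}$---is identical. The only cosmetic differences are that the paper phrases the pendant construction via the decreasing rearrangement (which for the even soliton coincides with your explicit dilation $\phi_\lambda(a+\rho/2)$) and then normalises to mass $\mu_\rr$ to get $E<0$, whereas you work directly with the scale-invariant Gagliardo--Nirenberg quotient and thereby sidestep the normalisation step; the latter is slightly cleaner but not a different idea.
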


	\begin{proof}
		
		Let us first note that $\G$ as in Figure \ref{FIG-critical} is periodic according to Definition \ref{DEF-periodic} with periodicity cell $\K$ given by a vertical edge with a circle attached at one of its endpoints and a horizontal edge pointing to the right at the other. Denote then by $\Gamma,\,\mathcal{B},\,\mathcal{H}$ the circle, the vertical and the horizontal edge of $\K$ respectively, so that $\K=\Gamma\cup\mathcal{B}\cup\mathcal{H}$, and set $2\gamma:=|\Gamma|$, $2\beta:=|\mathcal{B}|$ and $|\mathcal{H}|:=\delta$. Moreover, let as usual $\Gamma_i,\mathcal{B}_i,\mathcal{H}_i$ be the corresponding parts of the $i$--th copy of $\K$ in $\G$, $\K_i$, for every $i\in\zz$.
		
		Since, by definition, $\mathcal{E}_\G(\mu_\G)=0$, in order to prove that $\mu_\G<\mu_\rr$, the idea is to exhibit explicitly a function $u\in H_{\mu_\rr}^1(\G)$ such that $E(u,\G)<0$. 
		
		To this aim, given $\lambda>0$, let $\phi_\lambda\in H_{\mu_\rr}^1(\rr)$ be the critical soliton \eqref{eq-crit sol} on the real line and consider $w_\lambda\in H^1(\G)$ defined by the following procedure (see Figure \ref{FIG-w}). 
		
		%%%%%%%%%%%%%%%%%%% FIGURE %%%%%%%%%%%%%%%%%%%%%%%%%%%%%%%%%%%%%%%%%%%
		
		\begin{figure}
			\centering
			\subfloat[][]{
			\includegraphics[width=0.6\textwidth]{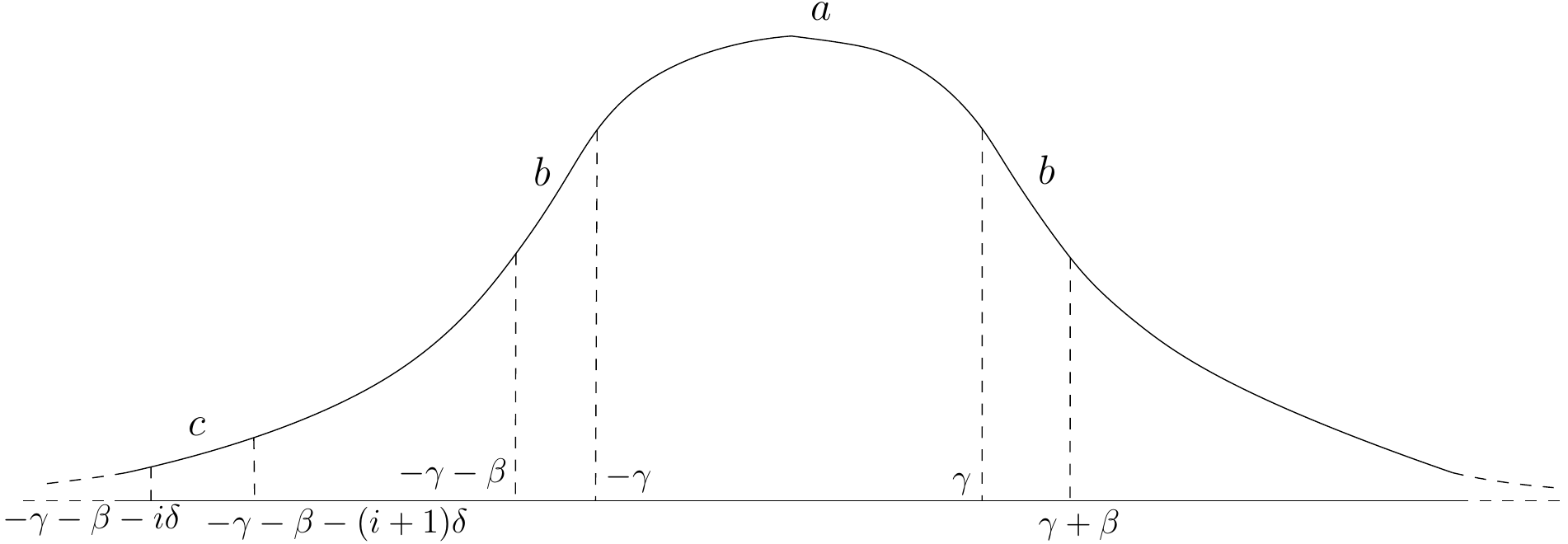}}\quad
			\subfloat[][]{
			\includegraphics[width=0.9\textwidth]{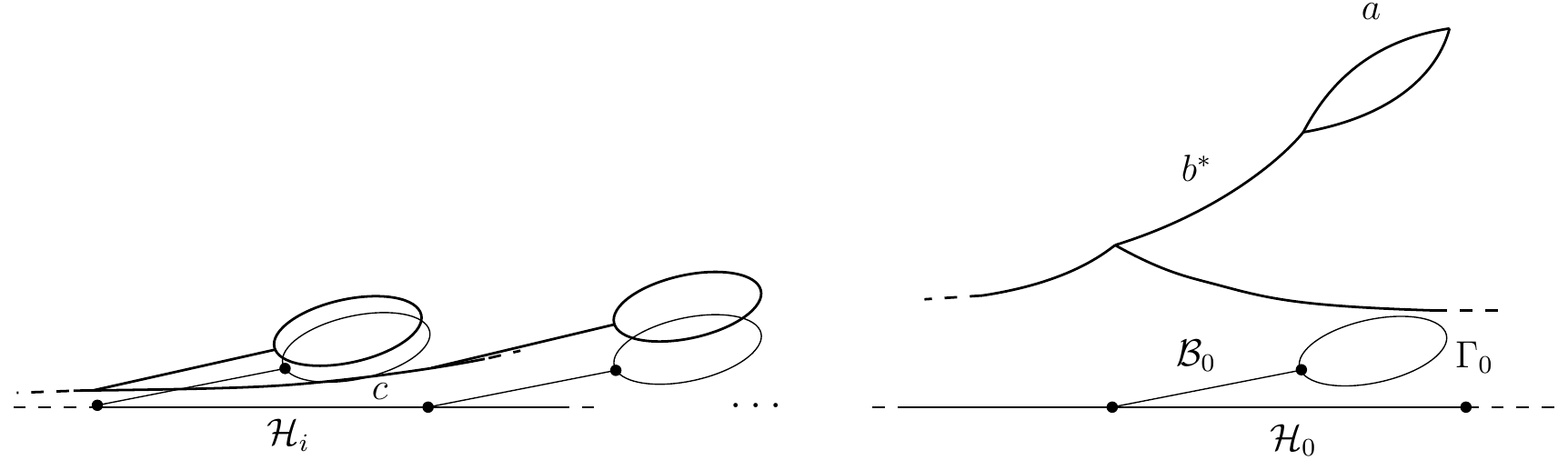}}
			\caption{(a) the soliton $\phi_\lambda$ on $\rr$ and (b) the corresponding function $w_\lambda$ on $\G$ as in the proof of Proposition \ref{prop-signpost}. The letters $a,\,b,\,c$ denote the restrictions of $\phi_\lambda$ to $(-\gamma,\gamma)$, $(-\gamma-\beta,-\gamma)\cup(\gamma,\gamma+\beta)$ and $(-\gamma-\beta-i\delta,-\gamma-\beta-(i+1)\delta)$ respectively (i<0), and $b^*$ denotes the decreasing rearrangement of $b$.}
			\label{FIG-w}
		\end{figure}
		
		First, let $\phi_{\lambda\mid_{(-\gamma,\gamma)}}$ be the restriction of $\phi_\lambda$ to the interval $(-\gamma,\gamma)$ and set
		
		\[
		w_{\lambda\mid_{\Gamma_0}}:=\phi_{\lambda\mid_{(-\gamma,\gamma)}}
		\]
		
		\noindent having identified $\Gamma_0$ with the interval $(-\gamma,\gamma)$ in such a way that both the endpoints $-\gamma,\,\gamma$ correspond to the vertex of $\Gamma_0$ attached to $\mathcal{B}_0$.
		
		Secondly, let $I:=[-\gamma-\beta,-\gamma]\cup[\gamma,\gamma+\beta)]$,  $\phi_{\lambda\mid_{I}}$ be the restriction of $\phi_\lambda$ to $I$ and $(\phi_{\lambda_I})^*$ be its decreasing rearrangement on $[0,2\beta]$. We then set
		
		\[
		w_{\lambda\mid_{\mathcal{B}_0}}:=(\phi_{\lambda\mid_I})^*
		\]
		
		\noindent provided the identification of $\mathcal{B}_0$ with the interval $[0,2\beta]$ so that the origin of $[0,2\beta]$ corresponds to the vertex that $\mathcal{B}_0$ shares with $\Gamma_0$. 
		
		Thirdly, for every $i\in\zz$, let either $I_i:=[\gamma+\beta+i\delta,\gamma+\beta+(i+1)\delta]$ if $i\geq0$, or $I_i:=[-\gamma-\beta-i\delta,-\gamma-\beta-(i+1)\delta]$ if $i<0$, and let $\phi_{\lambda\mid_{I_i}}$ be the restriction of $\phi_\lambda$ to $I_i$. Then, for every $i\in\zz$, we set
		
		\[
		w_{\lambda\mid_{\mathcal{H}_i}}:=\phi_{\lambda\mid_{I_i}}
		\]
		
		\noindent identifying $\mathcal{H}_i$ with $I_i$ so that the endpoint of $I_i$ with smallest absolute value corresponds to the vertex of $\mathcal{H}_i$ closest to $\K_0$.
		
		Finally, for every $i\in\zz\setminus\{0\}$, we set 
		
		\[
		w_{\lambda\mid_{\Gamma_i\cup\mathcal{B}_i}}:\equiv\begin{cases}
		\phi_\lambda(\gamma+\beta+i\delta) & \text{if }i>0\\
		\phi_\lambda(-\gamma-\beta-i\delta) & \text{if }i<0
		\end{cases}
		\]
		
		\noindent so that $w_\lambda$ is constant on every $\Gamma_i\cup\mathcal{B}_i$, $i\neq0$.
		
		Note that, by construction, $w_\lambda\in H^1(\G)$ and
		
		\[
		\begin{split}
		\|w_\lambda\|_{L^p(\Gamma_0)}&=\|\phi_\lambda\|_{L^p(-\gamma,\gamma)},\qquad\|w_\lambda\|_{L^p(\bigcup_{i\in\zz}\mathcal{H}_i)}=\|\phi_\lambda\|_{L^p(\rr\setminus(-\gamma-\beta,\gamma+\beta))}\qquad p\geq1\\
		\|w_\lambda'\|_{L^2(\Gamma_0)}&=\|\phi_\lambda'\|_{L^2(-\gamma,\gamma)},\qquad\|w_\lambda'\|_{L^2(\bigcup_{i\in\zz}\mathcal{H}_i)}=\|\phi_\lambda'\|_{L^2(\rr\setminus(-\gamma-\beta,\gamma+\beta))}\,,
		\end{split}
		\]
		
		\noindent whereas by the fact that the restriction of $\phi_\lambda$ to $(-\gamma-\beta,\gamma)\cup(\gamma,\gamma+\beta)$ attains exactly twice each value in its image and standard properties of decreasing rearrangements (see \cite[Lemma 2.1]{AST bound states})
		
		\[
		\begin{split}
		\|w_\lambda\|_{L^p(\mathcal{B}_0)}=&\|\phi_\lambda\|_{L^p((-\gamma-\beta,-\gamma)\cup(\gamma,\gamma+\beta))}\qquad p\geq1\\
		\|w_\lambda'\|_{L^2(\mathcal{B}_0)}\leq&\frac{1}{2}\|\phi_\lambda'\|_{L^2((-\gamma-\beta,-\gamma)\cup(\gamma,\gamma+\beta))}\,.
		\end{split}
		\]
		
		\noindent Hence, setting $c:=2\gamma+2\beta$, we have
		
		\[
		\begin{split}
		\int_\G|w_\lambda|^2\dx=&\int_\rr|\phi_\lambda|^2\dx+\int_{\bigcup_{i\in\zz\setminus\{0\}}(\Gamma_i\cup\mathcal{B}_i)}|w_\lambda|^2\dx=\mu_\rr+2c\sum_{i=1}^\infty|\phi_\lambda(c+i\delta)|^2\\
		\int_\G|w_\lambda|^6\dx=&\int_\rr|\phi_\lambda|^6\dx+\int_{\bigcup_{i\in\zz\setminus\{0\}}(\Gamma_i\cup\mathcal{B}_i)}|w_\lambda|^6\dx=\int_\rr|\phi_\lambda|^6\dx+2c\sum_{i=1}^\infty|\phi_\lambda(c+i\delta)|^6\\
		\end{split}
		\]
		
		\noindent and
		
		\[
		\begin{split}
		\int_\G|w_\lambda'|^2\dx=&\int_\rr|\phi_\lambda'|^2\dx-\int_{(-\gamma-\beta,-\gamma)\cup(\gamma,\gamma+\beta)}|\phi_\lambda'|^2\dx+\int_{\mathcal{B}_0}|w_\lambda'|^2\dx\\
		\leq&\int_\rr|\phi_\lambda'|^2\dx-\frac{3}{4}\int_{(-\gamma-\beta,-\gamma)\cup(\gamma,\gamma+\beta)}|\phi_\lambda'|^2\dx\,,
		\end{split}
		\]
		
		\noindent yielding at
		
		\begin{equation}
			\label{eq-E w}
			\begin{split}
			E(w_\lambda,\G)\leq &E(\phi_\lambda,\rr)-\frac{3}{8}\int_{(-\gamma-\beta,-\gamma)\cup(\gamma,\gamma+\beta)}|\phi_\lambda'|^2\dx-\frac{c}{3}\sum_{i=1}^\infty|\phi_\lambda(c+i\delta)|^6\\
			=&-\frac{3}{4}\int_{\gamma}^{\gamma+\beta}|\phi_\lambda'|^2\dx-\frac{c}{3}\sum_{i=1}^\infty|\phi_\lambda(c+i\delta)|^6
			\end{split}
		\end{equation}
		
		\noindent since $E(\phi_\lambda,\rr)=0$, for every $\lambda>0$.

		Now, recalling the explicit formula \eqref{eq-crit sol} for $\phi_\lambda$ leads to 
		
		\[
		\sum_{i=1}^\infty|\phi_\lambda(c+i\delta)|^2=\lambda\sum_{i=1}^\infty \text{sech}\Big(\frac{2}{\sqrt{3}}\lambda (c+i\delta)\Big)\sim\lambda\sum_{i=1}^\infty \frac{1}{e^{\frac{2}{\sqrt{3}}\lambda(c+i\delta)}}=\frac{\lambda}{e^{\frac{2}{\sqrt{3}}\lambda c}(e^{\frac{2}{\sqrt{3}}\lambda \delta}-1)}\sim\frac{\lambda}{e^{\frac{2}{\sqrt{3}}\lambda(c+\delta)}}
		\]
		
		\noindent as $\lambda\to+\infty$, so that we get
		
		\[
		\frac{\mu_\rr}{\|w_\lambda\|_{L^2(\G)}^2}\sim\frac{\mu_\rr}{\mu_\rr+\frac{\lambda}{e^{\frac{2}{\sqrt{3}}\lambda(c+\delta)}}}=1-r(\lambda)
		\]
		
		\noindent where 
		
		\[
		r(\lambda):=\frac{\lambda}{e^{\frac{2}{\sqrt{3}}\lambda(c+\delta)}\mu_\rr+\lambda}\sim\frac{\lambda}{e^{\frac{2}{\sqrt{3}}\lambda(c+\delta)}}\qquad\text{for }\lambda\to+\infty\,.
		\]
		
		\noindent Therefore, letting $u_\lambda:=\sqrt{\frac{\mu_\rr}{\|w_\lambda\|_{L^2(\G)}^2}}w_\lambda$, we have $u_\lambda\in H_{\mu_\rr}^1(\G)$ for every $\lambda$ and, provided $\lambda$ large enough,
		
		\[
		\begin{split}
		E(u_\lambda,\G)=&\frac{1}{2}\frac{\mu_\rr}{\|w_\lambda\|_{L^2(\G)}^2}\|w_\lambda'\|_{L^2(\G)}^2-\frac{1}{6}\Big(\frac{\mu_\rr}{\|w_\lambda\|_{L^2(\G)}^2}\Big)^3\|w_\lambda\|_{L^6(\G)}^6\\
		=&\frac{1}{2}(1-r(\lambda))\|w_\lambda'\|_{L^2(\G)}^2-\frac{1}{6}(1-r(\lambda))^3\|w_\lambda\|_{L^6(\G)}^6+o(1)\\
		\leq&\frac{1}{2}(1-r(\lambda))\|w_\lambda'\|_{L^2(\G)}^2-\frac{1}{6}(1-4r(\lambda))\|w_\lambda\|_{L^6(\G)}^6+o(1)\\
		=&(1-r(\lambda))E(w_\lambda,\G)+\frac{1}{2}r(\lambda)\|w_\lambda\|_{L^6(\G)}^6+o(1)
		\end{split}
		\]
		
		\noindent taking advantage of the fact that $r(\lambda)\to0$ as $\lambda\to+\infty$. Combining with \eqref{eq-E w}, we get the following estimate
		
		\begin{equation}
		\label{eq-E u}
		E(u_\lambda,\G)\leq (1-r(\lambda))\Big(-\frac{3}{4}\int_{\gamma}^{\gamma+\beta}|\phi_\lambda'|^2\dx-\frac{c}{3}\sum_{i=1}^\infty|\phi_\lambda(c+i\delta)|^6\Big)+\frac{1}{2}r(\lambda)\|w_\lambda\|_{L^6(\G)}^6+o(1)
		\end{equation}
		
		\noindent holding for $\lambda\to+\infty$.
		
		Clearly, all the terms involved in the above expression goes to $0$ as $\lambda$ becomes infinite. Note thus that multiplying the big bracket by $r(\lambda)$ makes the whole product go to $0$ faster than $r(\lambda)$. Hence, since the term $\frac{2}{3}r(\lambda)\|w_\lambda\|_{L^6(\G)}^6$ is asymptotic to $\lambda^2 r(\lambda)$ as $\lambda$ increases, to understand the asymptotic behaviour of the upper bound in \eqref{eq-E u} we can neglect $(1-r(\lambda))$ in the first addend, simply replacing it by $1$. 
		
		Furthermore, with computations similar to the ones we performed before, it is readily seen that
		
		\[
		\sum_{i=1}^\infty|\phi_\lambda(c+i\delta)|^6\sim\frac{\lambda^3}{e^{\frac{2}{\sqrt{3}}3\lambda (c+\delta)}}=o(r(\lambda))\qquad\text{as }\lambda\to+\infty
		\]
		
		\noindent so that we can restrict ourselves to focus only on the first integral in the big bracket in \eqref{eq-E u}.
		
		Since, differentiating \eqref{eq-crit sol},
		
		\[
		\phi_\lambda'(x)=-\frac{2}{\sqrt{3}}\lambda^{3/2}\text{sech}^{3/2}\Big(\frac{2}{\sqrt{3}}\lambda x\Big)\sinh\Big(\frac{2}{\sqrt{3}}\lambda x\Big)\,,
		\]
		
		\noindent it follows
		
		\[
		\int_{\gamma}^{\gamma+\beta}|\phi_\lambda'|^2\dx=\frac{\sqrt{3}}{2}\lambda^2\Big[\arctan(\tanh(x/2))-\frac{1}{2}\tanh x\, \text{sech}\, x\Big]_{\frac{2}{\sqrt{3}}\lambda\gamma}^{\frac{2}{\sqrt{3}}\lambda(\gamma+\beta)}
		\]
		
		\noindent and expanding to the first order, when $\lambda$ is large enough,
			
		\[
		\lambda^2\Big[\arctan(\tanh(x/2))-\frac{1}{2}\tanh x\, \text{sech}\, x\Big]_{\frac{2}{\sqrt{3}}\lambda\gamma}^{\frac{2}{\sqrt{3}}\lambda(\gamma+\beta)}\sim \frac{\lambda^2}{e^{\frac{2}{\sqrt{3}}\lambda\gamma}}\,,
		\]
		
		\noindent so that (recalling also $c=2\gamma+2\beta$)
		
		\[
		\frac{\int_{\gamma}^{\gamma+\beta}|\phi_\lambda'|^2\dx}{\lambda^2 r(\lambda)}\sim\frac{\frac{\lambda^2}{e^{\frac{2}{\sqrt{3}}\lambda\gamma}}}{\frac{\lambda^3}{e^{\frac{2}{\sqrt{3}}\lambda(c+\delta)}}}=\frac{ e^{\frac{2}{\sqrt{3}}\lambda(2\beta+\gamma+\delta)}}{\lambda}\to+\infty\qquad\text{for }\lambda\to+\infty\,.
		\]
		
		\noindent Summing up, plugging into \eqref{eq-E u} shows that
		
		\[
		E(u_\lambda,\G)\leq -\frac{3}{4}\int_\gamma^{\gamma+\beta}|\phi_\lambda'|^2\dx+\frac{2}{3}r(\lambda)\|w_\lambda\|_{L^6(\G)}^6+o(1)<0
		\]
		
		\noindent as soon as $\lambda$ is sufficiently large, and we conclude.
	\end{proof}

	\appendix
	\section{Appendix}
	\label{appendix}

	As pointed out in Section \ref{sec:periodic}, it is not immediate to see that assumptions \textit{(i)}-\textit{(ii)} on $D,R$ and $\sigma$ we require stating Definition \ref{DEF-periodic} are not somehow restrictive. Precisely, one may wonder whether dropping these assumptions allows to exhibit any graph which cannot be constructed if $D,R$ and $\sigma$ satisfy \textit{(i)}-\textit{(ii)}. The following two propositions provide a negative answer to this question, ensuring that no loss of generality is at stake.
	
	From now on, we will say that two graphs $\G = (V (\G),E(\G))$, $\G' = (V (\G'),E(\G'))$ are equal, and we will write $\G = \G'$, if there exist two bijections $\varphi: V (\G) \to V (\G')$,  $\psi:E(\G) \to E(\G')$ such that $e\in E(\G)$ is an edge between $v,w \in V (\G)$ if and only if  $\psi(e) \in E(\G')$ is an edge between $\varphi(v), \varphi(w) \in V (\G')$, and $\psi$ is measure-preserving, that is $|e|=|\psi(e)|$, for every $e\in E(\G)$.
		
	\begin{prop}
		\label{PROP APP 1}
		Let $\K$ be a fixed compact graph, $D,R$ two non-empty subsets of $V(\K)$ and $\sigma:D\to R$ bijective. Suppose $D\cap R\neq\emptyset$ and let $\G$ be as in Definition \ref{DEF-periodic}. Then, either
				
		\begin{itemize}
			\item[(a)] \text{diam}$(\G)<+\infty$, or
 			\item[(b)] there exists a compact graph $\K'$, two non-empty subsets $D',R'\subset V(\K')$ and a bijection $\sigma': D'\to R'$, with $D'\cap R'=\emptyset$, such that if $\G'$ is the periodic graph with periodicity cell $\K'$ and pasting rule $\sigma'$ as in Definition \ref{DEF-periodic}, then $\G=\G'$.
 		\end{itemize}
	\end{prop}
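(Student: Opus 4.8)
I would split the proof according to whether the pasting rule $\sigma$, regarded as a partial self-map of $V(\K)$ with domain $D$ and image $R$, contains a cycle --- that is, whether there exist $v\in D$ and $k\geq1$ with $\sigma^{j}(v)\in D$ for $0\leq j<k$ and $\sigma^{k}(v)=v$. In the cyclic case I expect alternative (a) to hold, and in the acyclic case alternative (b).

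Suppose first that $\sigma$ has a cycle $v_0\to v_1\to\dots\to v_{m-1}\to v_0$. Chasing the elementary identifications $(v_j)_i\sim(\sigma(v_j))_{i+1}$ around the cycle, both forwards and backwards, starting from $(v_0)_0$, one finds that the $\sim$-class of $(v_0)_0$ contains the point $(v_{\ell\bmod m})_\ell$ of $\K_\ell$ for every $\ell\in\zz$. Thus a single point $P\in\G$ lies in every copy of $\K$. Since each $\K_\ell$ is isometric to the compact graph $\K$ and every $x\in\G$ lies in some $\K_\ell\ni P$, this gives $\mathrm{dist}_\G(x,P)\leq\mathrm{diam}(\K)$, hence $\mathrm{diam}(\G)\leq2\,\mathrm{diam}(\K)<+\infty$, which is (a).

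Assume now that $\sigma$ has no cycle. Then its functional graph is a disjoint union of finitely many directed paths; let $L\geq1$ be the largest number of edges of such a path. Since $D\cap R\neq\emptyset$ produces a path with an interior vertex, in fact $L\geq2$, and $L<+\infty$ because $V(\K)$ is finite. The key point is a span bound: the $\sim$-class of any vertex of $\mathbb{G}$ meets at most $L+1$ consecutive cells. Indeed, if such a vertex equals $w_s$ inside the maximal $\sigma$-path $w_0\to\dots\to w_{L'}$, then the $\sim$-class of $(w_s)_i$ equals $\{(w_t)_{i-s+t}:0\leq t\leq L'\}$, occupying cells $i-s,\dots,i-s+L'$, a window of width $L'+1\leq L+1$. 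Set $N:=L+1$, group the cells of $\G$ into the blocks $B_k:=\bigcup_{j=kN}^{kN+N-1}\K_j$ (so the $B_k$ tile $\G$), and take $\K':=B_0$ --- a connected compact graph, being $N$ copies of $\K$ glued along shared vertices --- together with $D':=V(B_0)\cap V(B_1)$, $R':=V(B_0)\cap V(B_{-1})$, and $\sigma':D'\to R'$ the restriction of the index-shift automorphism of $\G$ sending $B_k$ to $B_{k-1}$, which maps $V(B_0)\cap V(B_1)$ bijectively onto $V(B_{-1})\cap V(B_0)$. Both $D'$ and $R'$ are non-empty because consecutive cells always share the points $R_{j+1}=\sigma(D_j)$. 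The span bound yields $D'\cap R'=\emptyset$: a common vertex would lie in both $B_1$ and $B_{-1}$, whose cells are at least $N+1$ apart, contradicting width $\leq N$. The same bound shows no identification of $\mathbb{G}$ connects non-adjacent blocks, so within-block identifications are already quotiented inside $\K'$ while between-block ones are encoded precisely by $\sigma'$; hence rebuilding the periodic graph from $(\K',D',R',\sigma')$ returns exactly $\G$, and (b) holds.

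The main obstacle is the acyclic case, and specifically the identification of the correct regrouping scale $N$. Everything there rests on the span bound --- whose only input is the absence of $\sigma$-cycles --- and on the choice $N=L+1$, which is exactly what makes the new donor and receiver sets disjoint; the rest (that $\K'$ is a genuine connected compact graph, that $\sigma'$ is a well-defined bijection, and that the rebuilt quotient is literally $\G$) is bookkeeping once the span bound is available. The cyclic case is comparatively soft, reducing at once to the finite-diameter alternative.
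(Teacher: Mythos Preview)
Your proof is correct and follows the same overall strategy as the paper: both split on whether $\sigma$ has a cycle among the vertices of $D\cap R$, obtain alternative~(a) in the cyclic case via the same diameter bound $2\,\mathrm{diam}(\K)$, and in the acyclic case build $\K'$ out of several consecutive copies of $\K$. The difference lies in how the acyclic case is organised. The paper proceeds by explicit case analysis---first treating the situation where no element of $D\cap R$ maps to another (taking two copies of $\K$), then handling a chain $x^{j_1}\to\dots\to x^{j_m}$ inside $D\cap R$ (taking $m+2$ copies)---and writes out the new pasting rule $\sigma'$ vertex by vertex. Your argument replaces this case split by a single uniform construction: you isolate the maximal $\sigma$-path length $L$, prove the ``span bound'' that every $\sim$-class occupies at most $L+1$ consecutive cells, and then take $N=L+1$ blocks at once, defining $D',R',\sigma'$ via the shift automorphism. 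This is cleaner and makes the disjointness $D'\cap R'=\emptyset$ an immediate consequence of one lemma rather than a computation in each sub-case; the paper's version, in exchange, is more concrete and in the simplest sub-case uses fewer copies than your choice of $N$ would.
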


	\begin{proof}
		
		Let $D\cap R=\{x^1,\,.\,.\,.\,,\,x^n\}$, for some $x^1,\,.\,.\,.\,,\,x^n\in\K$.
		Moreover, for every $i\in\zz$ and every $j\in\{1,\,.\,.\,.\,,\,n\}$, denote by $x_i^j\in\G$ the copy of $x^j$ belonging to $\K_i$.
		
		We split the proof into two parts.

		\medskip
		\textit{Part (i).} Suppose that there exists a subset $\{x^{j_1},\,.\,.\,.\,,\,x^{j_l}\}$ of $D\cap R$ such that (see Figure \ref{FIG-star})
		
		 \[\sigma(x^{j_1})=x^{j_2},\,.\,.\,.\,,\,\sigma(x^{j_{l-1}})=x^{j_l},\,\sigma(x^{j_l})=x^{j_1}\,.
		 \]
		
		 \noindent Note that, building up $\G$ according to the pasting rule $\sigma$ as in Definition \ref{DEF-periodic}, for every $i_1,i_2\in\zz$, $s,t\in\{j_1,\,.\,.\,.\,,\,j_l\}$, $x_{i_1}^{s}$ and $x_{i_2}^{t}$ correspond to the same point of $\G$ if and only if $|s-t|=|i_1-i_2|$ mod $l$.
		
		Consider now $y,z\in\G$ and let $i_1,i_2\in\zz$ be such that $y\in\K_{i_1},\,z\in\K_{i_2}$. Moreover, let $s,t\in\{j_1,\,.\,.\,.\,,\,j_l\}$ be such that $|s-t|=|i_1-i_2|$ mod $l$. We get
		
		\[
		d(y,z)\leq d(y,x_{i_1}^{s})+d(x_{i_2}^{t},z)\leq 2\text{diam}(\K)
		\]
		
		\noindent and passing to the supremum over all $y,z\in\G$
		
		\[
		\text{diam}(\G)\leq2\text{diam}(\K)<+\infty
		\]
		
		\noindent so that case \textit{(a)} occurs.
		
		%%%%%%%%%%%%%%%%%%%% FIGURE STAR GRAPH %%%%%%%%%%%%%%%%%%%%%%%%%%%
		
		\begin{figure}[t]
			\centering
			\subfloat[][]{
				\includegraphics[width=0.25\columnwidth]{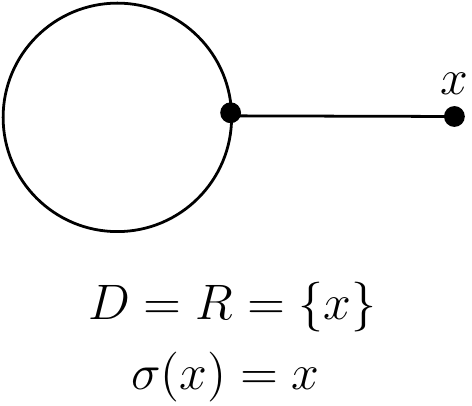}}\qquad\qquad
			\subfloat[][]{
				\includegraphics[width=0.25\columnwidth]{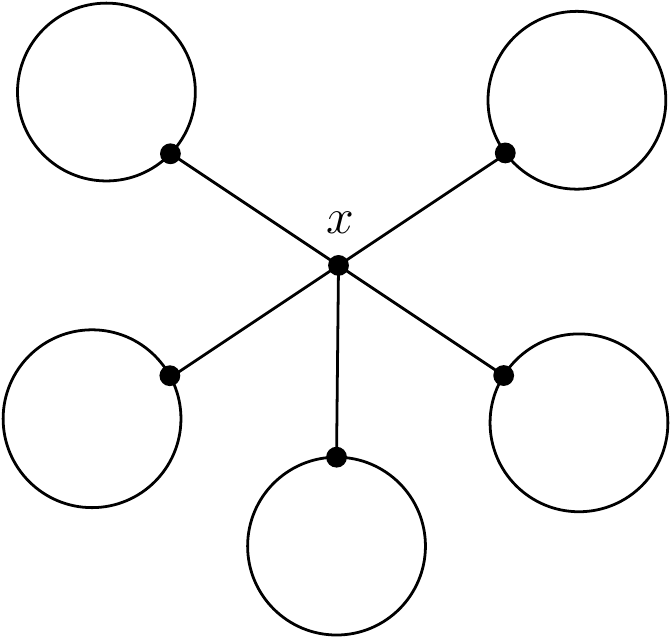}}
			\caption{example of $D,\,R$ and $\sigma$ as in part (i) of the proof of Proposition \ref{PROP APP 1} and the resulting graph $\G$.}
			\label{FIG-star}
		\end{figure}
		
		%%%%%%%%%%%%%%%%%%%%%%%%%%%%%%%%%%%%%%%%%%%%%%%%%%%%%%%%%%%%%%%%%%
		
		\medskip
		\textit{Part (ii).} Suppose now that no subset of $D\cap R$ satisfies the assumption of part (i). 
		
		Let us begin to deal with the case $\sigma(x^i)\neq x^j$, for every $i,j\in\{1,\,.\,.\,.\,,\,n\}$. Hence, for every $j\in\{1,\,.\,.\,.\,,\,n\}$, there exist $y^j\in D,\,z^j\in R$ be such that
		
		\[
		\begin{split}
			\sigma(y^j)=&x^j\\
			\sigma(x^j)=&z^j
		\end{split}
		\]
		\noindent and both $y^j\neq x^i$ and $z^j\neq x^i$, for every $i,j$.
				
		We then introduce the following construction. Consider two copies of $\K$, namely $\K_1,\K_2$, and paste them together according to $\sigma$, that is, looking at $\sigma$ as a map from $D_1$ into $R_2$, identify each element of $D_1$ with its image through $\sigma$ in $R_2$. This defines a compact graph $\K'=(V(\K'),E(\K'))$ with
		
		\[
		\begin{split}
		V(\K')=&\Big(V(\K_1)\cup V(\K_2)\Big)_{/\{a\sim b\,\Longleftrightarrow\,a\in D_1,\,b\in R_2,\,\sigma(a)=b\}}\\
		E(\K')=&E(\K_1)\cup E(\K_2)\,.
		\end{split}
		\]
		
		\noindent Notice that, as vertices of $\K'$, each $y_1^j$ is identified with the corresponding $x_2^j$, and each $x_1^j$ is identified with the corresponding $z_2^j$, for every $j\in\{1,\,.\,.\,.\,,\,n\}$. We denote by $v^{1,j}\in V(\K')$ the element of $V(\K')$ resulting from the identification of $y_1^j$ and $x_2^j$, and by $v^{2,j}\in V(\K')$ the element of $V(\K')$ resulting from the identification of $x_1^j$ and $z_2^j$. 
		
		On the one hand, since $\sigma(x^i)\neq x^j$, for every $i,j$, no $x_1^i$ is identified with any $x_2^j$, yielding at $v^{1,j}\neq v^{2,j}$, for every $i,j\in\{1,\,.\,.\,.\,,\,n\}$.
		
		On the other hand, as $x_2^j\in D_2$ for every $j$, thinking of $D_2$ as a subset of $V(\K')$, we have $v^{1,j}\in D_2$. Similarly, as $x_1^j\in R_1$ for every $j$, thinking of $R_1$ as a subset of $V(\K')$, we get $v^{2,j}\in R_1$.

		Therefore, setting $D', R'\subset V(\K')$ to be
		
		\[
		D':=D_2\qquad R':=R_1
		\]
		
		\noindent and noting that $D_2=(D_2/(D_2\cap R_2))\cup\{v^{1,j}\,:\,j=1,\,.\,.\,.\,,\,n\}$ and $R_1=(R_1/(D_1\cap R_1))\cup \{v^{2,j}\,:\,j=1,\,.\,.\,.\,,\,n\}$, it follows that $D'\cap R'=\emptyset$.

		We then define $\sigma': D'\to R'$
		
		\[
		\sigma'(v):=\begin{cases}
		\sigma(v) & \text{if }v\neq v^{1,j}, y_2^j,\,\text{for every }j=1,\,.\,.\,.\,,\,n\\
		v^{2,j}   & \text{if }v=y_2^j,\,\text{for some }j=1,\,.\,.\,.\,,\,n\\
		z_1^j & \text{if }v=v^{1,j},\,\text{for some }j=1,\,.\,.\,.\,,n\,,
		\end{cases}
		\]
		
		\noindent which is a bijection by construction and the fact that $\sigma$ is a bijection. 
		
		It is readily seen that pasting together $n$ copies $\K_1',\,.\,.\,.\,,\,\K_n'$ of $\K'$ according to $\sigma'$, identifying each element of $D_i'$ with its image through $\sigma'$ in $R_{i+1}'$, for every $i=1,\,.\,.\,.\,,\,n-1$, is equivalent to paste together $2n$ copies $\K_1,\,.\,.\,.\,,\,\K_{2n}$ of $\K$ according to $\sigma$, identifying each element of $D_i$ with its image through $\sigma$ in $R_{i+1}$, for every $i=1,\,.\,.\,.\,,\,2n-1$.

		Hence, let $\G'$ be the periodic graph determined by the periodicity cell $\K'$ and the pasting rule $\sigma'$ according to Definition \ref{DEF-periodic}. Considering, for every $i\in\zz$, the natural bijections between the vertices and edges of $\K_i'$ and the ones of $\K_{2i-1}\cup\K_{2i}$, it follows that $\G'=\G$, and case \textit{(b)} holds.

	   %%%%%%%%%%%%%%%%%%%%%% FIGURE PART ii PROP A1 %%%%%%%%%%%%%%%%%%%%%%%%%%%%%%%%%%%
		
		\begin{figure}[t]
			\centering
			\subfloat[][]{
				\includegraphics[width=0.25\textwidth]{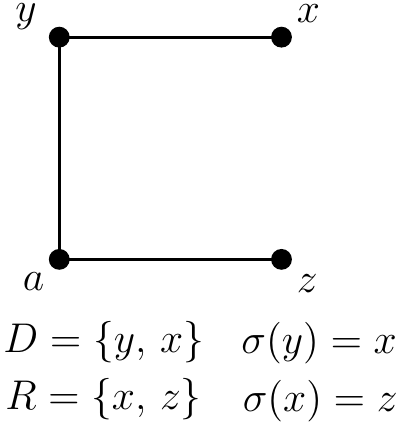}
			}\qquad\qquad
			\subfloat[][]{
				\includegraphics[width=0.25\textwidth]{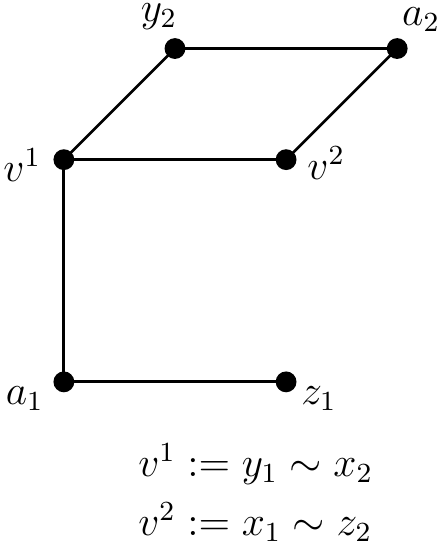}
			}
			
			\subfloat[][]{
				\includegraphics[width=0.7\textwidth]{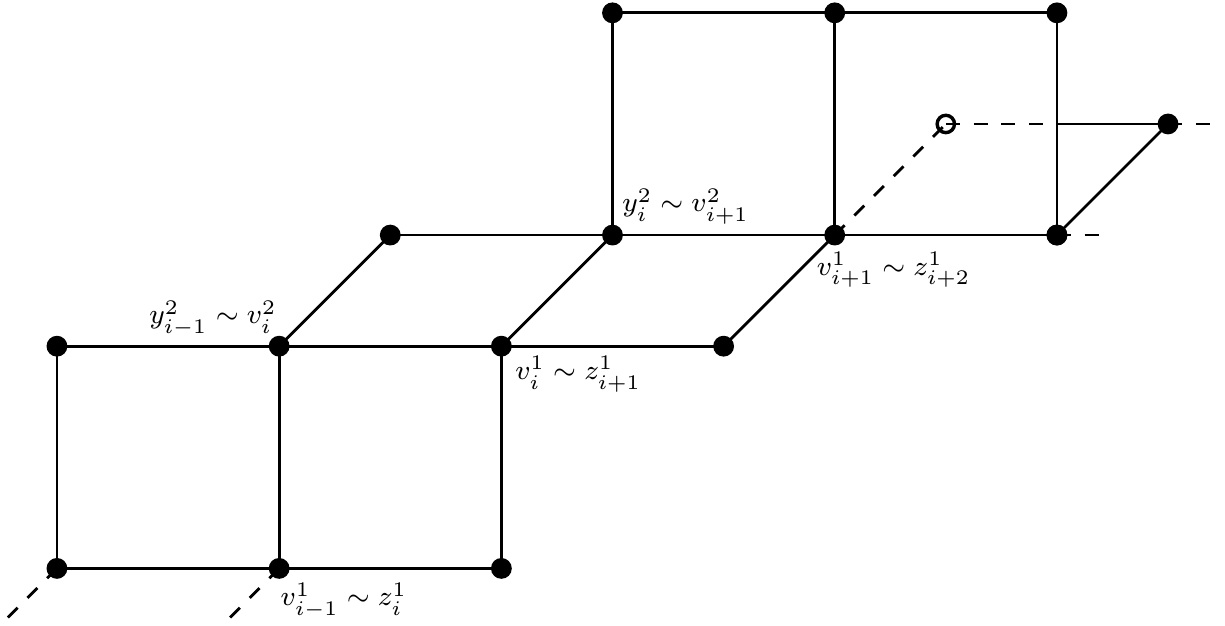}
			}
			\caption{example of $D,\,R$ and $\sigma$ as in part (ii) of the proof of Proposition \ref{PROP APP 1}, the graph $\K'$, and the resulting graph $\G$.}
			\label{FIG-part ii}
		\end{figure}
		
		%%%%%%%%%%%%%%%%%%%%%%%%%%%%%%%%%%%%%%%%%%%%%%%%%%%%%%%%%%%%%%%%%%%%%%%%%%%%%%%%%%%%%%%

		To conclude, it remains to drop the assumption that $\sigma(x^i)\neq x^j$, for every $i,j\in\{1,\,.\,.\,.\,,n\}$, which only reflects into a minor modification of the previous argument. 
		
		Indeed, suppose that there exist $x^{j_1},\,.\,.\,.\,,\,x^{j_m}$ such that $\sigma(x^{j_1})=x^{j_2}$, $\sigma(x^{j_2})=x^{j_3},\,.\,.\,.\,,\,\sigma(x^{j_{m-1}})=x^{j_m}$. Moreover, let $y^{j_1}, z^{j_m}$ be so that $\sigma(y^{j_1})=x^{j_1}$, $\sigma(x^{j_m})=z^{j_m}$, and $y^{j_1},z^{j_m}\neq x^i$, for every $i=1,\,.\,.\,.\,,\,n$.
		
		For the sake of simplicity, assume also that no other $i,j$ satisfies $\sigma(x^i)=x^j$. However, this does not cause any loss of generality, and what follows straightforwardly adapts to cover the more general situation.
		
		We then consider $m+2$ copies of $\K$, say $\K_0,\,\K_1,\,.\,.\,.\,,\,\K_{m+1}$, and we paste together $\K_i$ with $\K_{i+1}$, for every $i=0,\,.\,.\,.\,,\,m$, according to $\sigma$, that is, we identify each element of $D_i$ with its image through $\sigma$ in $R_{i+1}$, for every $i=0,\,.\,.\,.\,,\,m$. A compact graph $\K'$ arises from this procedure, and we introduce a shorthand notation for the following identifications
		
		\[
		\begin{split}
		v^{1,1}:=&x_{m+1}^{j_m}\sim x_{m}^{j_{m-1}}\sim\,.\,.\,.\,\sim x_1^{j_1}\sim y_0^{j_1}\\
		v^{1,2}:=&x_{m+1}^{j_{m-1}}\sim x_m^{j_{m-2}}\sim\,.\,.\,.\,\sim x_2^{j_1}\sim y_1^{j_1}\\
		.\,.\,.&\\
		v^{1,m}:=&x_{m+1}^{j_1}\sim y_m^{j_1}
		\end{split}
		\]
		
		\noindent and
		
		\[
		\begin{split}
		v^{2,1}:=&x_0^{j_1}\sim x_1^{j_2}\sim\,.\,.\,.\,\sim x_{m-1}^{j_m}\sim z_m^{j_m}\\
		v^{2,2}:=&x_0^{j_2}\sim x_1^{j_3}\sim\,.\,.\,.\,\sim x_{m-2}^{j_m}\sim z_{m-1}^{j_m}\\
		.\,.\,.&\\
		v^{2,m}:=&x_0^{j_m}\sim z_1^{j_m}\,.
		\end{split}
		\]

		\noindent As it is immediate to verify, $v^{1,i}\neq v^{2,j}$, for every $i,j\in\{1,\,.\,.\,.\,,\,m\}$. Furthermore, looking at $R_0$ and $D_{m+1}$ as subsets of $V(\K')$, we get  $v^{1,i}\in D_{m+1}$ and $v^{2,i}\in R_0$ for every $i=1,\,.\,.\,.,\,m$.
		
		Therefore, setting
		
		\[
		D':=D_{m+1}\qquad R':=R_0
		\]
		
		\noindent and $\sigma':D'\to R'$
		
		\[
		\sigma'(v):=\begin{cases}
		\sigma(v) & \text{if }v\neq v^{1,j}\,\text{for every }j=1,\,.\,.\,.\,,\,m\\
		v^{2,m+2-j} & \text{if }v=v^{1,j},\,\text{for some }j=2,\,.\,.\,.\,,\,m\\
		z_0^{j_m} & \text{if }v=v^{1,1}\,,
		\end{cases}
		\]
		
		\noindent it can be easily verified that $D'\cap R'=\emptyset$, $\sigma'$ is a bijection from $D'$ to $R'$ and that the periodic graph $\G'$ given by Definition \ref{DEF-periodic} with periodicity cell $\K'$ and pasting rule $\sigma'$ satisfies $\G'=\G$, so that \textit{(b)} is proved to hold again.
\end{proof}

	\begin{rem}
		Metric graphs as in case \textit{(a)} of Proposition \ref{PROP APP 1} can be called \textit{star-like graphs} (see Figure \ref{FIG-star}). Since graphs like this do not satisfy $\text{diam}(\G)=+\infty$, we do not want to take them into account in the present paper, so that assuming $D\cap R=\emptyset$ in Definition \ref{DEF-periodic} is actually useful to avoid such situation. 
	\end{rem}

	\begin{prop}
		\label{PROP APP 2}
		Let $\K$ be a fixed compact graph, $D,R$ two non-empty subsets of $V(\K)$ such that $D\cap R=\emptyset$, and $\sigma:D\to R$. Suppose $\sigma$ is not bijective and let $\G$ be as in Definition \ref{DEF-periodic}. Then, there exists a compact graph $\K'$, two non-empty subsets $D',R'\subset V(\K')$, with $D'\cap R'=\emptyset$, and a bijection $\sigma':D'\to R'$, such that if $\G'$ is the periodic graph with periodicity cell $\K'$ and pasting rule $\sigma'$, then $\G=\G'$.
	\end{prop}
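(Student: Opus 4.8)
The plan is to repair the failure of bijectivity by two successive, harmless modifications of the data $(\K,D,R,\sigma)$, neither of which alters the quotient graph $\G$: first I would shrink $R$ so that $\sigma$ becomes surjective, then collapse the fibres of $\sigma$ inside $\K$ so that $\sigma$ becomes injective. Since $D\cap R=\emptyset$, the argument at the end of Section \ref{sec:periodic} already guarantees $\mathrm{diam}(\G)=+\infty$, so — unlike in Proposition \ref{PROP APP 1} — no ``star-like'' obstruction can arise, and one may build the required $\K',D',R',\sigma'$ directly, with no case distinction.

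\emph{Step 1: forcing surjectivity.} Set $R':=\sigma(D)\subseteq R$ and keep $\K$, $D$ and $\sigma$ (now viewed as a map $D\to R'$). In Definition \ref{DEF-periodic} the set $R$ enters \emph{only} through the gluing clause $v\mapsto\sigma(v)$, whose image is exactly $R'$; a vertex in $R\setminus R'$ is neither of the form $\sigma(v)$ nor an element of $D$ (as $D\cap R=\emptyset$), hence receives no identification and survives as an ordinary vertex of its copy of $\K$ in either construction. Consequently the periodic graph built from $(\K,D,R',\sigma)$ is literally the same quotient as $\G$, while $D\cap R'=\emptyset$, $R'\neq\emptyset$ (as $\sigma$ is a function on the nonempty set $D$), and $\sigma\colon D\to R'$ is surjective. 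After relabelling I may therefore assume from now on that $\sigma$ is surjective onto $R$.

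\emph{Step 2: forcing injectivity.} The key remark is that whenever $v,v'\in D$ satisfy $\sigma(v)=\sigma(v')=:w$, then in $\G$ one has $v_i\sim w_{i+1}\sim v'_i$ for every $i\in\zz$, so $v_i$ and $v'_i$ are the same point of $\G$. I would then let $\K'$ be the compact graph obtained from $\K$ by identifying $v$ with $v'$ for every such pair, i.e. by collapsing each fibre $\sigma^{-1}(w)$ to a single vertex; no edge is deleted, so $E(\K')$ is in measure-preserving bijection with $E(\K)$ (a fibre carrying an edge of $\K$ merely produces a self-loop, which is allowed). Let $D',R'\subseteq V(\K')$ be the images of $D$ and $R$; since only vertices of $D$ are identified and $D\cap R=\emptyset$, one still has $D'\cap R'=\emptyset$, and both sets are nonempty. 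Finally $\sigma$ descends to $\sigma'\colon D'\to R'$, $\sigma'(\overline v):=w$ for $\overline v$ the class $\sigma^{-1}(w)$; this is well defined, injective by construction of the quotient and surjective because $\sigma$ was, hence bijective. Thus $(\K',D',R',\sigma')$ is admissible data in the sense of Definition \ref{DEF-periodic}.

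\emph{Step 3: $\G'=\G$.} Let $\G'$ be the periodic graph with cell $\K'$ and rule $\sigma'$. Copy by copy, the quotient map $\K\to\K'$ yields a measure-preserving bijection $E(\G')\leftrightarrow E(\G)$ compatible with incidence. For the vertices, one uses once more that $D\cap R=\emptyset$: no identification in $\mathbb{G}=\bigcup_i\K_i$ propagates across more than one gluing step, so every nontrivial $\sim$-class is contained in two consecutive copies and has the form $\{w_{i+1}\}\cup\{v_i : v\in\sigma^{-1}(w)\}$ with $w\in R$, $i\in\zz$. Under the correspondence $\K_i\leftrightarrow\K'_i$ these are precisely the classes that $\sim'$ produces once $\sigma^{-1}(w)$ has been collapsed to the single vertex $\overline v\in D'$, namely the pairs $\{w_{i+1},\overline v_{\,i}\}$, together with the singletons. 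Hence $V(\G')\leftrightarrow V(\G)$ compatibly with incidence and lengths, i.e. $\G=\G'$, with $\sigma'$ bijective, which is the claim. The conceptual content of Steps 1–2 is immediate; the main (purely combinatorial) obstacle is the bookkeeping in Step 3 — the explicit description of the $\sim$-classes on $\mathbb{G}$ and the check that the two gluing recipes give the same incidence structure — and it is exactly there that the hypothesis $D\cap R=\emptyset$ is exploited to exclude longer identification chains.
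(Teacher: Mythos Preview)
Your argument is correct and follows essentially the same route as the paper's proof: first discard the unused receivers to make $\sigma$ surjective, then collapse each fibre $\sigma^{-1}(w)\subset D$ to a single vertex of a new cell $\K'$ so that the induced $\sigma'$ becomes injective, and finally check that the two quotients coincide. The only cosmetic differences are that the paper carries out Step~2 for a single pair $s,t\in D$ with $\sigma(s)=\sigma(t)$ and remarks that the general case is analogous, whereas you collapse all fibres at once; and that your Step~3 spells out the shape of the nontrivial $\sim$-classes (using $D\cap R=\emptyset$ to rule out longer chains), while the paper leaves this verification implicit.
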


	\begin{proof}
		
		Note first that we can always assume that $\sigma$ is surjective. Indeed, if this is not the case, we simply redefine $R$ getting rid of the vertices with no pre-images in $D$.
		
		Suppose now that $\sigma$ is not injective. Without loss of generality, assume that there exist only two vertices $s,t\in D$ so that $\sigma(s)=\sigma(t)=\overline{r}$, for some $\overline{r}\in R$, as the argument we discuss below plainly generalizes to the case of more than one couple of elements of $D$ sharing the same image through $\sigma$.
		
		Let us now introduce the following construction (see Figure \ref{FIG-app 2}). Starting from $\K$, we identify $s$ and $t$, thus defining a new graph $\K'=(V(\K'), E(\K'))$ so that
		
		\[
		\begin{split}
		V(\K')=&V(\K)_{/\{s\sim t\}}\\
		E(\K')=&E(\K)\,.
		\end{split}
		\]
		
		\noindent By construction, as vertices of $\K'$, $s$ and $t$ correspond to the same element, say $\overline{v}\in V(\K')$. Moreover, as $s,t\in D$, thinking of $D$ as a subset of $V(\K')$, we get $\overline{v}\in D$. Hence, setting
		
		\[
		D':= D\qquad R':=R
		\]
		
		\noindent and defining $\sigma':D'\to R'$ as
		
		\[
		\sigma'(v):=\begin{cases}
		\sigma(v) & \text{if }v\neq\overline{v}\\
		\overline{r} & \text{if }v=\overline{v}\,,
		\end{cases}
		\]
		
		\noindent it is immediate to see that $D'\cap R'=\emptyset$ and $\sigma'$ is a bijection from $D'$ into $R'$. Moreover, notice that pasting together two copies of $\K'$ according to $\sigma'$ is equivalent to paste together two copies of $\K$ according to $\sigma$.
		
		Therefore, let $\G'$ be the periodic graph as in Definition \ref{DEF-periodic} with periodicity cell $\K'$ and pasting rule $\sigma'$. Considering, for every $i\in\zz$, the natural bijections between the vertices and the edges of $\K_i'$ and the ones of $\K_i$, we have $\G=\G'$ and we conclude.
	\end{proof}
	
	%%%%%%%%%%%%%%%%%%%%%%%%%%% FIGURE PROP APP 2 %%%%%%%%%%%%%%%%%%%%%%%%%%%%%%%%%%%%
	
	\begin{figure}[t]
		\centering
		\subfloat[][]{
		\includegraphics[width=0.2\textwidth]{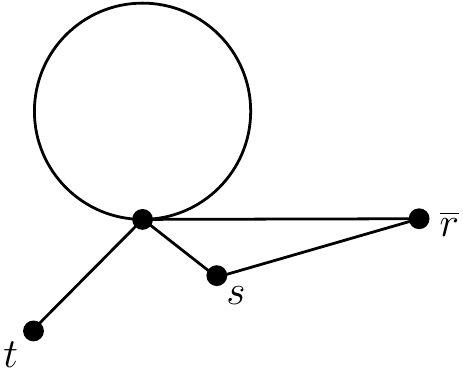}
		}\quad
		\subfloat[][]{
		\includegraphics[width=0.2\textwidth]{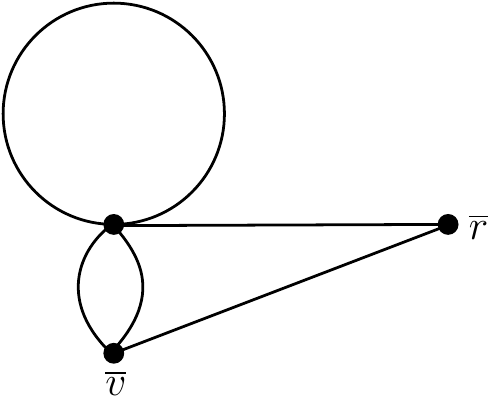}}\quad
		\subfloat[][]{
		\includegraphics[width=0.4\textwidth]{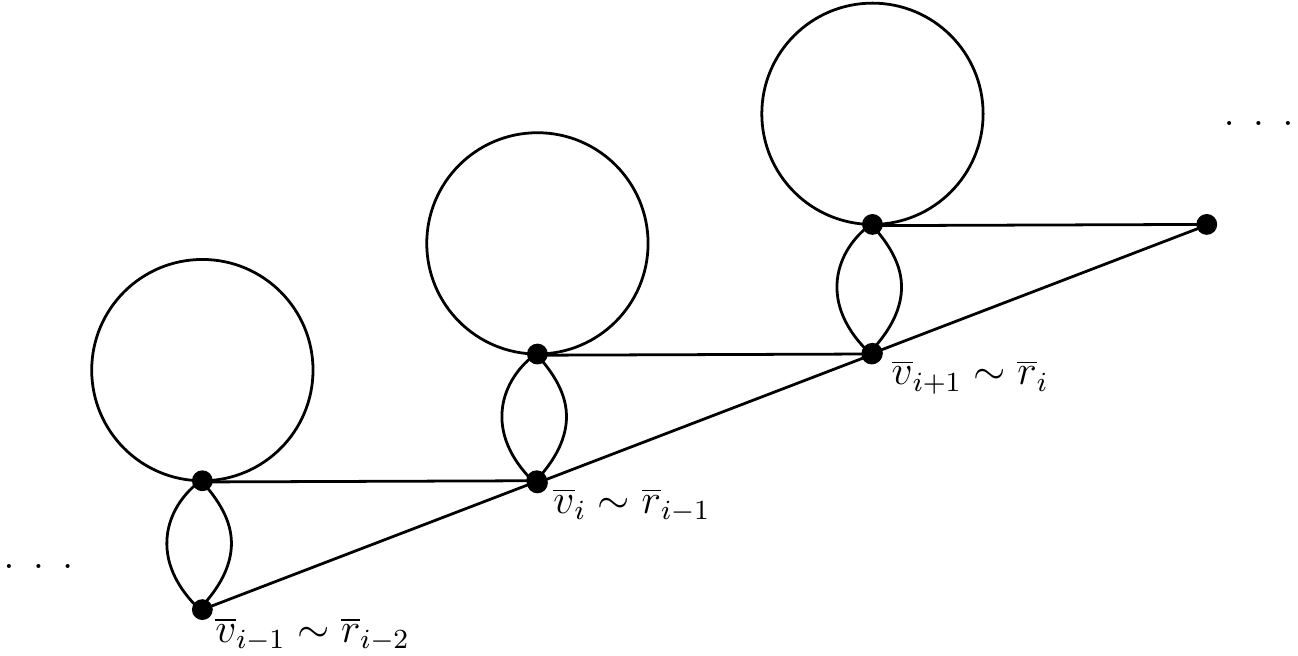}}
		\caption{example of the construction in the proof of Proposition \ref{PROP APP 2}.}
		\label{FIG-app 2}
	\end{figure}
	
	\textbf{Acknowledgments}
	
	\noindent The author is grateful to Riccardo Adami and Enrico Serra for fruitful discussions during the preparation of this work.

\end{document}